\newtheorem{theorem}{Theorem}
\newtheorem{prop}[theorem]{Proposition}
\newtheorem{lemma}[theorem]{Lemma}
\theoremstyle{remark}
\newtheorem*{remark}{Remark}
\def\Grp#1{\left(#1\right)}
\def\Cbr#1{\left\{#1\right\}}
\def\Sbr#1{\left[#1\right]}
\def\Abs#1{\left|#1\right|}
\def\Sp#1{\sp{(#1)}}
\def\th{\sp{\rm th}}
\def\uto{\uparrow}
\def\dto{\downarrow}
\def\Linf{\varliminf}
\def\nth#1{\frac{1}{#1}}
\def\lfrac#1#2{#1/#2}     % "Inline fraction".  The advantage of this
\def\Scr#1{\mathscr{#1}}
\def\cf#1{\mathbf{1}\!\Cbr{#1}}
\def\sumoi#1{\sum_{#1=1}^\infty}
\def\sumzi#1{\sum_{#1=0}^\infty}
\def\intii{\int_{-\infty}^\infty}
\def\intzi{\int_0^\infty}
\def\dd{\mathrm{d}}
\def\gv{\,|\,}
\def\iunit{\mathrm{i}}
\def\mean{\mathbb{E}}
\def\pr{\mathbb{P}}
\def\ralph{{1/\alpha}}
\def\LT#1{\widetilde #1}  % Laplace  transform
\def\FT#1{\widehat #1}  % Laplace  transform
\def\Re{\mathrm{Re}}
\def\res{\mathrm{Res}}
\def\toi{\to\infty}
\def\root{\varsigma}
\def\rx{\epsilon}
\def\Coms{\mathbb{C}}
\def\Nats{\mathbb{N}}
\def\Reals{\mathbb{R}}
\def\mittag{Mittag-Leffler\xspace}
\def\levy{L\'evy\xspace}
\def\@cleandot{\@ifnextchar.{}{\@ifnextchar,{.}{\@ifnextchar;{.}{\@ifnextchar?{.}{\@ifnextchar:{.}{\@ifnextchar!{.}{\@ifnextchar'{.}{\@ifnextchar){.}{.\
                }}}}}}}}}
\def\pdf{p.d.f\@cleandot}
\def\lhs{l.h.s\@cleandot}
\def\rhs{r.h.s\@cleandot}
\begin{document}
\begin{center}
  \large
  \textbf{Law of two-sided exit by a
    spectrally positive strictly stable process\footnote{Research
      partially supported by NSF Grant DMS 1720218.}
  }  
  \\[1.5ex]
  \normalsize
  Zhiyi Chi \\
  Department of Statistics\\
  University of Connecticut \\
  Storrs, CT 06269, USA, \\[.5ex]
  E-mail: zhiyi.chi@uconn.edu \\[1ex]
  \today
\end{center}

\begin{abstract}
  For a spectrally positive strictly stable process with index in
  (1,\,2), the paper obtains i) the density of the time when the
  process makes first exit from an interval by hitting the interval's
  lower end point before jumping over its upper end point, and
  ii) the joint distribution of the time, the undershoot, and the jump
  of the process when it makes first exit the other way around.  For
  i), the density of the time of first exit is expressed as an
  infinite sum of functions, each the product of a polynomial
  and an exponential function, with all coefficients determined by
  the roots of a \mittag function.  For ii), conditional on the
  undershoot, the time and the jump of first exit are independent, and
  the marginal conditional densities of the time has similar features
  as i).

  \medbreak\noindent
  \textit{Keywords and phrases.}  Two-sided exit problem; \levy process;
  stable; spectrally positive; \mittag

  \medbreak\noindent
  2000 Mathematics Subject Classifications: Primary 60G51; Secondary
  60E07.

\end{abstract}

\section{Introduction}
The so-called exit problems, which concern the random event that a
stochastic process gets out of a set for the first time, occupy a
prominent place in the study of \levy processes.  For spectrally
positive \levy processes, years of intensive research have revealed
many remarkable facts about first exit from a bounded closed interval
\cite{bertoin:96:cup, kyprianou:14:sh, doney:07:sg-b}.  An essential
tool for many of the results is the scale function.  Since
the function can be analytically extended to $\Coms$ (\cite
{kyprianou:14:sh}, Lemma 8.3), it is amenable to treatments by
complex analysis.  By combining the scale function and residual
calculus, this paper obtains series expressions of the distribution of
first exit of a spectrally positive strictly stable process with index
in $(1,2)$.

Henceforth, without loss of generality, let $X$ be a \levy process
with 
\begin{align} \label{e:X}
  \mean [e^{-q X_t}] = e^{t q^\alpha}, \quad t>0,\ q>0,\ \alpha\in
  (1,2).
\end{align}
Given $b,c\in (0, \infty)$, first exit by $X$ from interval $[-b,c]$
consists of two possibilities: either the process makes a continuous
downward passage of $-b$ before it makes an upward jump across $c$ or
the other way around.  The probability of each possibility is
well-known (cf.\ \cite
{bertoin:96:cup}, Theorem VII.8).  Meanwhile, the scale functions of
the first exit have been known \cite{kyprianou:14:sh}.  On the other
hand, not much is known about the explicit joint probability density
function (\pdf) for the first exit.

In \cite{chi:18:tr}, by using Laplace transform, the distribution of
first upward passage of a fixed level by $X$ is obtained.  It turns
out that the method used there can be extended to first exit from an
interval.  Section \ref {s:lower} considers the time of first exit
from $[-b,c]$ at $-b$.  It will be shown that the probability density
function (\pdf) of the time has an expression in terms of the
residuals of a certain function at the roots of a \mittag function,
and as a result, is of the form $f(t) = \sum_\root p_\root(t)
e^{t\root}$, where the sum runs over the roots and for each root
$\root$, $p_\root(t)$ is a polynomial in $t$ whose coefficients are
determined by $\root$ and several \mittag functions.  For all but a
finite number of $\root$, $p_\root(t)$ is of order 0.  The result
provides a connection to some known results on first exit of a
standard Brownian motion.  It also highlights the importance of
gaining more information on the roots of \mittag functions
\cite{popov:13:jmathsci}.  Section \ref{s:upper} considers the joint
distribution of the time, the undershoot, and the jump of $X$ when its
first exit from $[-b,c]$ occurs at $c$.  It will be shown that
conditional on the undershoot, the time and the jump are independent.
This allows the joint distribution to be factorized into the marginal
\pdf of the undershoot, and the marginal conditional \pdf's of the
time and the jump, respectively.  The expression of the marginal
conditional \pdf of the time has similar features as the one of first
exit at the lower end.  This is in contrast to the power series
expression of the time of first upward passage of $c$ \cite
{chi:18:tr, bernyk:08:ap, simon:11:sto, peskir:08:ecp}, even though
the first passage can be regarded as the limit of first exit from
$[-b,c]$ as $b\toi$.  In both sections, the asymptotics of the time of
first exit as $t\to 0$ or $\infty$ are also considered.

The rest of the section fixes notation and collects some background
information for later use.

\paragraph{Integral transforms.} For $f\in L^1(\Reals)$, denote
its Laplace transform and Fourier transform, respectively, by
\begin{gather*}
  \LT f(z) = \int e^{-t z} f(t)\,\dd t
  \quad\text{and}\quad
  \FT f(\theta) = \LT f(-\iunit \theta), \quad \theta\in\Reals.
\end{gather*}
The domain of $\LT f$ is $\{z\in\Coms: e^{-t z} f(t)\in L^1(\dd t)\}$.
Similarly, for a finite measure $\mu$ on $\Reals$, denote its Laplace
transform and Fourier transform, respectively, by
\begin{gather*}
  \LT\mu(z) = \int e^{-t z} \mu(\dd t)
  \quad\text{and}\quad
  \FT\mu(\theta) = \LT\mu(-\iunit\theta),\quad \theta\in\Reals.
\end{gather*}
The domain of $\LT\mu$ is $\{z\in\Coms: e^{-t z} \in L^1(\dd t)\}$.

Let $z_0\in\Coms$.  Denote $U_r(z_0) = \{z\in\Coms: |z-z_0|<r\}$.  If
function $g$ is analytic in $U_r(z_0) \setminus\{z_0\}$ for some $r>0$
and has $z_0$ as a pole, possibly removable, then the residual of $g$
at $z_0$ is 
\begin{align*}
  \res(g(z), z_0) = c_1 = \frac1{2\pi\iunit} \oint_\gamma g(z)\,\dd z,
\end{align*}
where $\gamma$ is any counterclockwise simple closed contour in
$U_r(z_0)\setminus \{z_0\}$ (\cite{rudin:87:mcgraw}, p.~224).

\paragraph{Some properties of a \mittag function.}  A \mittag function
with parameters $a>0$ and $b\in\Coms$ is an entire function defined as
\begin{align*}
  E_{a, b}(z) = \sumzi  n \frac{z^n}{\Gamma(a n + b)},
  \quad z\in\Coms.
\end{align*}
Let $\Scr Z_{a, b} = \{z\in\Coms: E_{a,b}(z) = 0\}$.  The focus will
be mostly on $E_{\alpha, \alpha}(z)$ with $\alpha\in (1,2)$.  By
\cite{popov:13:jmathsci}, Theorems 1.2.1, 1.4.2, and 1.5.1, 
\begin{align} \label{e:asym-infty}
  E_{\alpha, \alpha}(z)
  =
  \alpha^{-1} z^{\ralph-1} \exp(z^\ralph)
  -
  \frac{(\alpha - 1)\alpha z^{-2}}{\Gamma(2-\alpha)} 
  + O(|z|^{-3}), \quad |z|\toi,
\end{align}
where the $O(\cdot)$ term is uniform in $\arg z$ and the principle
branch of $z$ is used so that $\arg(e^{\iunit\theta}) = \theta -
2k\pi$ for any $\theta\in \Reals$ with $k$ the unique integer
satisfying $2k-1 < \theta/\pi \le 2k+1$.  Furthermore, from
Theorems 2.1.1 and 4.2.1, and Chapter 6 in \cite{popov:13:jmathsci},
\begin{enumerate}[label={\arabic*)}, topsep=.5ex, parsep=0ex]
\item $\Scr Z_{\alpha,\alpha}$ has infinitely many elements and all
  those with large enough modulus are simple roots of $E_{\alpha,
    \alpha}(z)$ and can be enumerated as $\root_{\pm n}$, $n\ge N$,
  for some large $N$, such that
  \begin{align} \label{e:zero-asym}
    \root_{\pm n}
    =
    [1+o(1)](2\pi n)^\alpha e^{\pm\iunit\alpha\pi/2},
    \quad n\toi;
  \end{align}
\item  $\Scr Z_{\alpha,\alpha}\subset \{z: |\arg(z)| >
  \alpha\pi/2\}$; and
\item $E_{\alpha, \alpha}(z)$ has a finite positive number of real
  roots, all being negative.
\end{enumerate}

\paragraph{First passage time and hitting time.}
For $c>0$ and $x\in\Reals$, denote $T_c = \inf\{t>0: X_t>c\}$ and
$\tau_x = \inf\{t>0:\ X_t = x\}$.  Under the law of $X$,  $T_c <
\tau_c < \infty$ and $X_{T_c} > c > X_{T_c-}$ a.s.\ \cite
{simon:11:sto}, both $T_c$ and $\tau_x$ have \pdf's \cite
{bertoin:96:cup, bernyk:08:ap, peskir:08:ecp, simon:11:sto}, and given
$b>0$, as downward movement is continuous and 0 is regular for
$(-\infty, 0)$, $\tau_{-b} = \inf\{t: X_t < -b\}$ (\cite
{doney:07:sg-b}, Theorem 5.17), so the time of first exit from
$[-b,c]$ is $\min(\tau_{-b}, T_c)$.  When $\tau_{-b} < T_c$
(resp.\ $\tau_{-b} > T_c$), $X$ is said to make first exit from
$[-b,c]$ at the lower (resp.\ upper) end.  Denote by $g_t$ the \pdf of
$X_t$ and by $f_x$ the \pdf of $\tau_x$.  The distribution of $\tau_x$
is classical for $x<0$ (\cite {bertoin:96:cup}, Theorem VII.1) and is
also known for $x>0$ \cite  {simon:11:sto}.

We will rely on the scale function $W\Sp q$ of $-X$ for the derivation
(\cite{kyprianou:14:sh}, Chapter 8; also cf.\ \cite {bertoin:96:cup,
  doney:07:sg-b}).  For the spectrally negative strictly stable
process $-X$, 
\begin{align} \label{e:ss-scale}
  W\Sp q(x) = x^{\alpha-1}_+ E_{\alpha,\alpha}(q x^\alpha_+),  \quad
  q\ge 0,
\end{align}
where $x_+ = \max(x,0)$ (\cite{kyprianou:14:sh},  p.~250).

\section{Distribution of first exit time at lower end} \label{s:lower}

Given $c>0$ and $x<c$, denote by
\begin{align*}
  k_{x,c}(t)
  =
  \frac{\pr\{\tau_x\in\dd t,\, X_s<c\,\forall s\le t\}}{\dd t}
  =
  \frac{\pr\{\tau_x\in\dd t,\, T_c>\tau_x\}}{\dd t}.
\end{align*}
Since $\tau_x$ has a \pdf, $k_{x,c}(t)$ is well-defined, and since its
integral over $t$ is $\pr\{T_c > \tau_x\}<1$, it is a sub-\pdf rather
than a proper one.  For $b>0$ and $c>0$, it is well-known that (\cite
{kyprianou:14:sh}, Theorem 8.1)
\begin{align} \label{e:LT-k}
  \LT{k_{-b,c}}(q) = \frac{W\Sp q(c)}{W\Sp q(b+c)}
  =
  \frac{c^{\alpha-1}}{(b+c)^{\alpha-1}}
  \frac{
    E_{\alpha, \alpha}(c^\alpha q)
  }{
    E_{\alpha, \alpha}((b+c)^\alpha q)
  }.
\end{align}
  
\begin{prop} \label{p:exit-lower}
  Given $b>0$ and $c>0$, put $s = c^\alpha/(b+c)^\alpha$.  Then 
  \begin{align} \label{e:k-psi}
    k_{-b,c}(t)
    =
    \frac{c^{\alpha-1}}{(b+c)^{2\alpha-1}}
    \psi_s\Grp{\frac{t}{(b+c)^\alpha}},
  \end{align}
  where 
  \begin{align} \label{e:residual}
    \psi_s(t)
    = 
    \sum_{\root\in\Scr Z_{\alpha,\alpha}} \res(H_s(z) e^{t z},
    \root), \quad t>0,
  \end{align}
  is a \pdf concentrated on $[0,\infty)$ and for $s\in [0,1)$
  \begin{align}  \label{e:H_s(q)}
    H_s(z) =   \frac{
      E_{\alpha, \alpha}(s z)
    }{
      E_{\alpha, \alpha}(z)
    }, \quad z\in\Coms.
  \end{align}
  Furthermore, $\psi_s\in C^\infty(\Reals)$ such that for all $n\ge
  1$, $\psi\Sp n_s(x)\to 0$ as $x\dto 0$ or $x\toi$.
\end{prop}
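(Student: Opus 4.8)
The plan is to invert the Laplace transform in \eqref{e:LT-k} by residue calculus. First I would separate out the scaling: put $s=c^\alpha/(b+c)^\alpha\in[0,1)$ and let $\psi_s$ be the function with Laplace transform $H_s$ as in \eqref{e:H_s(q)}. Since $s(b+c)^\alpha=c^\alpha$, a change of variables shows that $t\mapsto\frac{c^{\alpha-1}}{(b+c)^{2\alpha-1}}\psi_s(t/(b+c)^\alpha)$ has Laplace transform $\frac{c^{\alpha-1}}{(b+c)^{\alpha-1}}H_s\bigl((b+c)^\alpha q\bigr)=\LT{k_{-b,c}}(q)$, so \eqref{e:k-psi} follows from the uniqueness of Laplace transforms. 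In particular $\psi_s$ is a positive constant times $k_{-b,c}$ evaluated at a rescaled time, hence nonnegative and carried by $[0,\infty)$, while $\int\psi_s=\LT{\psi_s}(0)=H_s(0)=1$; thus $\psi_s$ is a proper \pdf on $[0,\infty)$.

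Next I would establish \eqref{e:residual}. By item~2), and since $\alpha\pi/2>\pi/2$, every root of $E_{\alpha,\alpha}$ has $\Re z<0$, so $H_s$ is meromorphic with all of its poles — which lie in $\Scr Z_{\alpha,\alpha}$, a pole at $\root$ being reduced (possibly to a removable one, of residue $0$) when $s\root\in\Scr Z_{\alpha,\alpha}$ — in the open left half plane. By \eqref{e:asym-infty}, on a vertical line $\Re z=\gamma\ge0$ one has $\Re(z^\ralph)\toi$ as $|z|\toi$ and $|H_s(z)|\le C\exp\bigl(-(1-s^\ralph)\Re(z^\ralph)\bigr)$ decays faster than any power, so the Bromwich formula $\psi_s(t)=\frac1{2\pi\iunit}\int_{\Re z=\gamma}H_s(z)e^{tz}\,\dd z$ holds for $t>0$ and gives the continuous version of $\psi_s$. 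I would then shift the contour to the left: by \eqref{e:zero-asym} the moduli of the roots near level $R$ are spaced $\gtrsim R^{1-\ralph}$ apart, so one may pick $R_m\toi$ with the circle $|z|=R_m$ staying at distance $\gtrsim R_m^{1-\ralph}$ from $\Scr Z_{\alpha,\alpha}$; applying the residue theorem on $\{|z|<R_m,\ \Re z<\gamma\}$ and letting $m\toi$ yields \eqref{e:residual} provided the integral over the circular arc $\Gamma_m$ of that region's boundary tends to $0$. This arc estimate is the crux. Fix $\delta\in(0,(\alpha-1)\pi/2)$. Where $|\arg z|\le\alpha\pi/2-\delta$ one has $\Re(z^\ralph)\gtrsim R_m^\ralph$ and $H_s$ is super-exponentially small by \eqref{e:asym-infty}. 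On the complementary part of $\Gamma_m$, $|e^{tz}|=e^{t\Re z}$ with $\Re z\le R_m\cos(\alpha\pi/2-\delta)<0$ is exponentially small because $t>0$, so it suffices that $|H_s(z)|$ there grow no faster than a fixed power of $R_m$: this is clear where $|\arg z|$ exceeds $\alpha\pi/2$ by more than $CR_m^{-\ralph}\log R_m$ (there the first term of \eqref{e:asym-infty} is negligible, $E_{\alpha,\alpha}(z)$ and $E_{\alpha,\alpha}(sz)$ are each of order $|z|^{-2}$, so $H_s$ is bounded), and on the remaining thin sliver around $\arg z=\pm\alpha\pi/2$ it follows from the choice of $R_m$ together with \eqref{e:zero-asym}. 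Summing these bounds against the arc length $O(R_m)$ gives $\int_{\Gamma_m}\to0$, hence \eqref{e:residual}. The same asymptotics give $|\res(H_s(z)e^{tz},\root_{\pm n})|\le C|\root_n|^{C'}e^{t\Re\root_n}$ with $\Re\root_n\sim(2\pi n)^\alpha\cos(\alpha\pi/2)\to-\infty$, so the series converges absolutely for every $t>0$, each residue being $\frac{E_{\alpha,\alpha}(s\root)}{E_{\alpha,\alpha}'(\root)}e^{t\root}$ for all but finitely many $\root$ and a polynomial in $t$ times $e^{t\root}$ otherwise.

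Finally, $\psi_s\in C^\infty(\Reals)$ with the claimed decay of derivatives follows cleanly from Fourier inversion. Since $H_s$ is analytic on the imaginary axis, $\FT{\psi_s}(\theta)=H_s(-\iunit\theta)$, and \eqref{e:asym-infty} with $\arg(-\iunit\theta)=\mp\pi/2$ gives $|\FT{\psi_s}(\theta)|\le C\exp\bigl(-(1-s^\ralph)\cos(\pi/(2\alpha))\,|\theta|^\ralph\bigr)$, which decays faster than any power of $|\theta|$. Hence $\theta\mapsto\theta^n\FT{\psi_s}(\theta)$ is in $L^1(\Reals)$ for every $n$, so $\psi_s$ has a version in $C^\infty(\Reals)$ and each $\psi_s^{(n)}$ is, up to a constant, the Fourier transform of an $L^1$ function, whence $\psi_s^{(n)}(x)\to0$ as $x\toi$ by the Riemann--Lebesgue lemma; and since $\psi_s$ is carried by $[0,\infty)$, each $\psi_s^{(n)}$ vanishes identically on $(-\infty,0)$ and therefore, by continuity, $\psi_s^{(n)}(x)\to0$ as $x\dto0$ as well. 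The degenerate case $s=0$, with $H_0=1/(\Gamma(\alpha)E_{\alpha,\alpha})$, goes through the same way, a few powers of $|z|$ in the bounds merely shifting. The only genuinely delicate step is the arc estimate, since the contour must be threaded through the sectors $\arg z=\pm\alpha\pi/2$ along which $\Scr Z_{\alpha,\alpha}$ accumulates; the rest is bookkeeping with \eqref{e:asym-infty} and \eqref{e:zero-asym}.
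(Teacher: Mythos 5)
Your overall architecture (scaling to reduce to $H_s$, Fourier/Bromwich inversion, residue theorem over a sequence of expanding contours, absolute convergence of the residue series, and smoothness via super-polynomial decay of $\FT{\mu_s}$) matches the paper, and those parts are fine. The genuine gap is exactly at what you call the crux: the arc estimate on the thin sliver around $\arg z=\pm\alpha\pi/2$. First, your justification for choosing the circles $|z|=R_m$ is not valid: \eqref{e:zero-asym} only gives $\root_{\pm n}=[1+o(1)](2\pi n)^\alpha e^{\pm\iunit\alpha\pi/2}$, a relative error $o(1)$, i.e.\ an absolute uncertainty $o(n^\alpha)$ in each root, which can be far larger than the claimed gap $R^{1-\ralph}$ between consecutive moduli; so ``the moduli are spaced $\gtrsim R^{1-\ralph}$ apart'' does not follow (what does follow is only the counting $\#\{\root: |\root|\le R\}=O(R^\ralph)$, from which a pigeonhole argument would give circles whose radius avoids the set of root moduli by $\gtrsim R_m^{1-\ralph}$ — a fixable but different argument). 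Second, and more seriously, even granting such circles, you give no argument converting ``distance $\gtrsim R_m^{1-\ralph}$ from $\Scr Z_{\alpha,\alpha}$'' into an upper bound on $|H_s(z)|$ (equivalently a lower bound on $|E_{\alpha,\alpha}(z)|$) on the sliver. This is precisely the delicate point: there the two terms of \eqref{e:asym-infty} nearly cancel (that is why the zeros accumulate along those rays), so the asymptotic expansion alone gives no lower bound, and turning a distance-to-zeros statement into a quantitative minimum-modulus bound requires an additional tool (a local linearization $E_{\alpha,\alpha}(z)\approx E'_{\alpha,\alpha}(\root)(z-\root)$ with uniform control of its radius of validity, or a canonical-product/minimum-modulus estimate), none of which is supplied; the sentence ``it follows from the choice of $R_m$ together with \eqref{e:zero-asym}'' is an assertion, not a proof.

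For comparison, the paper circumvents this issue by not using circles at all: the contour \eqref{e:contour} is designed so that $z^\ralph=R\sigma(\theta)e^{\iunit\theta/\alpha}$ has $\mathrm{Im}(z^\ralph)=\pm R$ along it, and taking $R_n=2\pi n$ pins the phase of $\exp(z^\ralph)$; Lemma \ref{l:contour} then shows $|z^{\ralph+1}\exp(z^\ralph)-A|$ stays bounded away from $0$ on $C_{R_n,2}$, whence $|E_{\alpha,\alpha}(z)|\ge\rx|z|^{-2}$ there — a polynomial-type lower bound obtained with no information about the actual positions of the zeros, which is then beaten by $|e^{tz}|\le e^{-b_0R_n^\alpha t}$. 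To repair your proof you would either need to supply a quantitative minimum-modulus argument for $E_{\alpha,\alpha}$ on your circles (including a correct construction of the circles via zero counting rather than the spacing claim), or adopt a phase-pinning device of the paper's kind; as written, the key estimate that makes the contour shift legitimate is missing.
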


\begin{remark}
  Since $\pr\{\tau_{-b} < T_c\} = \pr\{\tau_{-b} < \tau_c\} = 
  \lfrac{c^{\alpha-1}}{(b+c)^{\alpha-1}}$ (\cite {bertoin:96:cup},
  Theorem  VII.8), by \eqref {e:k-psi}, conditional on the event that
  $X$ makes first exit from $[-b,c]$ by hitting $-b$, the scaled exit
  time $(b + c)^\alpha\tau_{-b}$ has \pdf $\psi_s$ with $s
  =c^\alpha/(b+c)^\alpha$.
\end{remark}

The main feature of Proposition \ref{p:exit-lower} is that it
expresses the \pdf of the time of first exit in terms of the roots of
the \mittag function $E_{\alpha,  \alpha}(z)$.  As one may suspect,
the expression results from residual calculus for \eqref{e:LT-k} as a
meromorphic function on $\Coms$.  However, since currently there is
little  precise knowledge on the values of the roots of
$E_{\alpha,\alpha}$, the contour involved in the calculation has to
be chosen carefully.  For each term in the sum \eqref
{e:residual}, if $\root\in \Scr Z_{\alpha, \alpha}$ has multiplicity
$n$, then for $t>0$,
\begin{align*}
  \res(H_s(z) e^{t z}, \root)
  =
  \nth{(n-1)!}
  \lim_{z\to\root}
  \frac{\dd^{n-1}}{\dd z^{n-1}}\Sbr{
    \frac{E_{\alpha, \alpha}(s z) e^{t z}}
    {E_{\alpha, \alpha}(z)/(z-\root)^n}
  },
\end{align*}
which has the form $\sum^{n-1}_{k=0} c_k(\root) t^{n-1-k} e^{t
  \root}$.  Moreover, if $\root$ has large enough modulus, then it
is a  simple root (\cite {popov:13:jmathsci}, Theorem 2.1.1), giving
\begin{align*}
  \res(H_s(z) e^{t z}, \root)
  = 
  \frac{E_{\alpha, \alpha}(s\root) e^{t\root}}
  {E'_{\alpha,\alpha}(\root)}. 
\end{align*}

Proposition \ref {p:exit-lower} is an extension of a similar result on
first exit of a standard Brownian motion.  If $\alpha=2$, then by
$\mean[e^{-q X_t}] = e^{t q^2}$ for $q>0$, $X_t = B_{2t}$ with $B_t$ a
standard Brownian motion.  By $E_{2,2}(z) = \sinh(\sqrt z)/\sqrt z$,
$\Scr  Z_{2,2} = \{-k^2\pi^2, k\in\Nats\}$ and $E'_{2,2}(z) =
[\cosh(\sqrt z)- E_{2,2}(z)]/(2 z)$.  Since $E'_{2,2}(-k^2  \pi^2) =
(-1)^{k-1}/(2k^2\pi^2)$, each root is simple.  Then from the above
display with $s = c^2/(b+c)^2$ and Proposition \ref{p:exit-lower},
\begin{align*}
  \frac{\pr\{\tau_{-b} \in \dd t,\, \tau_c>\tau_{-b}\}}{\dd t}
  =
  \frac{2\pi}{(b+c)^2} \sumoi k (-1)^{k-1} k \sin\Grp{
    \frac{k\pi c}{b+c}
  }
  \exp\Cbr{-\frac{k^2\pi^2 t}{(b+c)^2}}.
\end{align*}
The series expansion is different from the one in \cite
{borodin:02:bvb} (p.~212, 3.0.6).  However, it can be proved using the
heat equation method (\cite{morters:10:cup}, section 7.4);
see for example \cite{chi:16:tr} for a derivation.
  
Following a heuristic for a standard Brownian motion (cf.\ \cite
{morters:10:cup}, p.~217), one can get a different expression of
$k_{-b,c}$ analogous to the one for the standard Brownian motion in
\cite{borodin:02:bvb} (p.~212, 3.0.6):
\begin{align} \nonumber
  k_{-b,c}
  &= 
  f_{-b} - f_c * f_{-b-c} + f_{-b} * f_{b+c} * f_{-b-c} - 
  f_c * f_{-b-c} * f_{b+c} * f_{-b-c} + \cdots
  \\\label{e:k-alt}
  &=
  \sumzi n f_{-b}* (\delta - f_c*f_{-c}) * (f_{b+c} * f_{-b-c})^{*n},
\end{align}
where all the terms involved are taken as \pdf's of time, $\delta$ is
the degenerate distribution at 0, and $p^{*0}:=\delta$ for any \pdf
$p$.  Indeed, thinking of $f_{-b}(t)$ as the probability that $X$ hits
$-b$ at time $t$ for the first time, and $k_{-b,c}(t)$ as the one that
$X$ does so before it ever hits $c$, $f_{-b}(t) - k_{-b,c}(t)$ is the
probability that $X$ does so after it hits $c$, so by strong Markov
property,
\begin{align} \label{e:k-k}
  k_{-b,c}(t) = f_{-b}(t) - (k_{c,-b} * f_{-b-c})(t),
\end{align}
where we have used the extended definition
$k_{x,c}(t)=\lfrac{\pr\{\tau_x \in \dd  t,\, \tau_c > \tau_x\}}{\dd
  t}$ for any $x$, $c\in\Reals$.  Likewise, $k_{c,-b}(t) = f_c(t) -
(k_{-b,c} * f_{b+c})(t)$.  Then iterating the two
identities gives \eqref{e:k-alt}.  A rigorous proof of \eqref {e:k-alt} will be considered
in Section \ref {ss:exit-lower-alt}.

\subsection{Properties of scaled first exit time at lower end}
This subsection considers some properties of $H_s(z)$ as defined in
\eqref{e:H_s(q)}.  Along the way it proves the smoothness of
$\psi_s$ stated at the end of Proposition \ref{p:exit-lower}.

From \eqref {e:LT-k} and scaling, it follows that for any $s\in
(0,1)$, $H_s(q)$ is the Laplace transform of the probability
distribution
\begin{align*}
  \mu_s(\dd t)
  =
  \pr\{\tau_{s^\ralph-1}
  \in\dd t\gv \tau_{s^\ralph - 1} < T_{s^\ralph}\},  \quad
  t>0,
\end{align*}
i.e., for $q>0$,
\begin{align}  \label{e:mu_s}
  \LT{\mu_s}(q) = H_s(q)
  =
  \frac{E_{\alpha,\alpha}(sq)}{E_{\alpha,\alpha}(q)}.
\end{align}
It is seen that the identity holds for all $q\in\Coms$ with $\Re(q)
\ge0$.  By \eqref{e:mu_s}, given $s\in (0,1)$, $H_s(q)$ is completely
monotone in $q\ge 0$, and for $z\in\Coms$ with $\Re(z)\ge0$,
$|H_s(z)|\le1$, and so $|E_{\alpha, \alpha}(s z)|\le |E_{\alpha,
  \alpha} (z)|$.  Thus for any $\theta\in [-\pi/2,\pi/2]$,
$|E_{\alpha, \alpha}(e^{\iunit \theta} r)|$ is increasing in $r\ge0$,
so letting $r\to0+$,
\begin{align*}
  |E_{\alpha, \alpha}(z)|\ge E_{\alpha, \alpha}(0)
  =
  \frac1{\Gamma(\alpha)},
  \quad \Re(z)\ge 0.
\end{align*}

Fix $s\in (0,1)$.  For $\theta\in\Reals$,
\begin{align} \label{e:FT}
  \FT{\mu_s}(\theta) = H_s(-\iunit \theta)
  =
  \frac{E_{\alpha, \alpha}(-\iunit s\theta)}
  {E_{\alpha, \alpha}(-\iunit \theta)}.
\end{align}
On the other hand, $|e^{(-\iunit \theta)^\ralph}| = e^{\lambda
|\theta|^\ralph}$ with $\lambda =\cos(\alpha^{-1} \pi/2)>0$, so by
\eqref{e:asym-infty},
\begin{align} \label{e:asym-infty2}
  |E_{\alpha,   \alpha}(-\iunit\theta)| \sim \alpha^{-1}
  |\theta|^{\ralph-1} e^{\lambda|\theta|^\ralph}, \quad
  \theta\to\pm\infty
\end{align}
where $x\sim y$ means $x/y\to1$.  Then from \eqref{e:FT},
$|\FT{\mu_s}(\theta)| \sim s^{\ralph-1} e^{\lambda(s^\ralph-1) 
  |\theta|^\ralph}$.  As a result, $\int |\FT{\mu_s}(\theta)| 
|\theta|^n \,\dd \theta < \infty$ for all $n\ge 0$, so $\mu_s$ has
a \pdf in $C^\infty(\Reals)$ with vanishing derivative of any order at
$\pm\infty$ (\cite{sato:99:cup}, Proposition 28.1).  By \eqref
{e:LT-k}, the \pdf is exactly $\psi_s$ in Proposition \ref
{p:exit-lower}.  Since $\psi_s$ is supported on $[0,\infty)$, it is
seen that $\psi\Sp n_s(x)\to 0$ as $x\to0+$.

From \eqref{e:FT} and the Continuity Theorem of characteristic
functions (cf.\ \cite {breiman:92:siam}, Theorem 8.28), as $s\to 0+$,
$\mu_s$ weakly converges to a probability distribution $\mu_0$ with
\begin{align*}
  \FT{\mu_0}(\theta) =
  H_0(-\iunit\theta) = 
  \frac1{\Gamma(\alpha)E_{\alpha, \alpha}(-\iunit\theta)},
  \quad\theta\in \Reals.
\end{align*}
Similar to $\mu_s$ with $s\in (0,1)$, $\mu_0$ has a \pdf $\psi_0\in
C^\infty(-\infty, \infty)$ with support on $[0,\infty)$ such that all
its derivatives $\psi\Sp n_0(x)$ vanish as $x\to 0+$, $\infty$.
Consequently, for each $s\in [0,1)$, $\psi_s$ cannot be analytically
extended to a neighborhood of 0, otherwise, $\psi_s$ would be constant
0.  On the other hand, by Fourier inversion (\cite{rudin:87:mcgraw},
p.~185),
\begin{align} \label{e:exit-lower-ift}
  \psi_s(t)
  =
  \nth{2\pi} \intii \FT{\mu_s}(\theta) e^{-\iunit\theta t}\,\dd\theta
  =
  \nth{2\pi} \lim_{M\toi} \int^M_{-M}
  \FT{\mu_s}(\theta) e^{-\iunit\theta t}\,\dd\theta,
\end{align}
and from \eqref{e:mu_s}, $\mu_s$ has finite moment of any order, with
the $n\th$ moment equal to $(-1)^n H\Sp n_s(0)$.

\subsection{Contour integration}
This subsection completes the proof of Proposition \ref{p:exit-lower}.
Because \eqref{e:k-psi} directly follows from \eqref{e:LT-k}, it
only remains to show \eqref{e:residual}.

\begin{proof}[Proof of Eq.~\eqref{e:residual}]
  Define function
  \begin{align*}
    \sigma(\theta) = \nth{|\sin(\theta/\alpha)|}.
  \end{align*}
  Since $\alpha>1$, $\sigma(\theta)$ is bounded on $[-\pi, -\pi/2]\cup
  [\pi/2, \pi]$.  Fix
  $\beta\in (\pi/2, \pi/\alpha)$.  For $R>0$, let $C_R$ be the contour
  that travels along the curve
  \begin{align} \label{e:contour}
    \{[R \sigma(\theta)]^\alpha e^{\iunit\theta}:\,
      \pi/2\le |\theta|\le \pi\}
  \end{align}
  from $\iunit (R\sigma_0)^\alpha$ to $-\iunit (R\sigma_0)^\alpha$,
  where $\sigma_0 = \sigma(\pi/2)$.  The contour is smooth except at its
  intersection with $(-\infty,0)$ and its length is proportional to
  $R^\alpha$.  Fix $\beta \in (\pi/2, \alpha\pi/2)$.  Let $C_{R,1}$
  denote the part of $C_R$ in the section $\pi/2\le |\arg z|\le
  \beta$, and $C_{R,2}$ the part in the section $\beta \le |\arg z|
  \le \pi$.

  For $z = re^{\iunit\theta}$ with $\theta = \arg z$,
  $|\exp(z^\ralph)| = \exp\{r^\ralph \cos(\theta/\alpha)\}$.  If $z\in
  C_{R,1}$, then $|\theta/\alpha|\le \beta/\alpha < \pi/2$, and so
  $\cos(\theta/\alpha)\ge \lambda:=\cos(\beta/\alpha)>0$.  As a result,
  for $z\in C_{R,1}$,
  \begin{align} \label{e:modulus-exp}
    |\exp(z^\ralph)| \ge \exp(\lambda |z|^\ralph).
  \end{align}
  Then by \eqref{e:asym-infty}, given $s\in (0,1)$, as $R\toi$,
  \begin{align*}
    H_s(z)
    = \frac{E_{\alpha, \alpha}(s z)}{E_{\alpha, \alpha}(z)}
    &=
    (1+o(1))
    \frac{(s z)^{\ralph-1} \exp((s z)^\ralph)}
    {z^{\ralph-1} \exp(z^\ralph)}
    \\
    &=
    (1+o(1)) s^{\ralph-1} \exp\{(s^\ralph-1) z^\ralph\}, \quad
    z\in C_{R,1},
  \end{align*}
  where the $o(1)$ term is uniform in $z\in C_{R,1}$.  Since $|z|\ge
  R^\alpha$, from \eqref{e:modulus-exp},
  \begin{align} \label{e:contour1}
    \sup_{z\in C_{R,1}} |H_s(z)| = O(\exp\{-\lambda(1-s^\ralph) R\}).
  \end{align}

  We also need a bound for $|H_s(z)| = |E_{\alpha,\alpha}(s z) /
  E_{\alpha, \alpha}(z)|$ on $C_{R,2}$.  However, since $E_{\alpha,
    \alpha}(z)$ has infinitely many roots in the section $\beta \le
  |\arg z|\le \pi$, $R$ cannot be any large positive number but has to
  be selected appropriately.  We need the following result.  
  \begin{lemma} \label{l:contour}
    Let $R_n = 2\pi n$, $n=1,2,\ldots$.  Then given any $A\in
    \Reals\setminus\{0\}$, 
    \begin{align} \label{e:dist}
      \Linf_{n\toi} \inf_{z\in C_{R_n,2}}
      |z^{\ralph+1} \exp(z^\ralph) - A|>0.
    \end{align}
  \end{lemma}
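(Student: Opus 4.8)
The plan is to eliminate the $\alpha$-th power by the substitution $w=z^{\ralph}$, which straightens $C_{R,2}$ into a piece of a horizontal line. A point of $C_{R,2}$ is of the form $z=[R\sigma(\theta)]^\alpha e^{\iunit\theta}$ with $\beta\le|\theta|\le\pi$, so, with the principal branch, $w=z^{\ralph}=R\sigma(\theta)\,e^{\iunit\theta/\alpha}$; since $\sigma(\theta)=1/|\sin(\theta/\alpha)|$ and $|\theta/\alpha|\le\pi/\alpha<\pi$, this gives $\mathrm{Im}(w)=\pm R$ (the sign being that of $\theta$) and $|\Re(w)|=R\,|\cot(\theta/\alpha)|$. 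As $\theta$ runs over $[\beta,\pi]$ the quantity $\cot(\theta/\alpha)$ decreases from $\cot(\beta/\alpha)>0$ to $\cot(\pi/\alpha)<0$, so $|\Re(w)|\le M_\beta R$ with $M_\beta:=\max\Grp{\cot(\beta/\alpha),\,|\cot(\pi/\alpha)|}$ a constant depending only on $\alpha,\beta$; the single point where $C_{R,2}$ meets $(-\infty,0)$ causes only a harmless jump of $\mathrm{Im}(w)$ between $\pm R$. Writing $w=z^{\ralph}$ one has $z^{\ralph+1}=z\cdot z^{\ralph}=w^{\alpha}\cdot w=w^{\alpha+1}$ (all powers via the principal branch, which is consistent here since $|\arg z|\le\pi<\alpha\pi$), hence $z^{\ralph+1}\exp(z^{\ralph})=w^{\alpha+1}e^{w}$. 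It therefore suffices — and I will prove the a priori stronger statement — that $\Linf_{n\toi}\inf|w^{\alpha+1}e^{w}-A|>0$ as $w$ ranges over all of $\Cbr{u\pm\iunit R_n:\ |u|\le M_\beta R_n}$.

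The point of taking $R_n=2\pi n$ is that $\mathrm{Im}(w)=\pm R_n$ gives $e^{\iunit\,\mathrm{Im}(w)}=e^{\pm2\pi\iunit n}=1$, so $e^{w}=e^{\Re(w)}$ is a positive real; hence $|w^{\alpha+1}e^{w}|=|w|^{\alpha+1}e^{\Re(w)}$ and $\arg(w^{\alpha+1}e^{w})\equiv(\alpha+1)\arg(w)\pmod{2\pi}$. I would then split according to the size of $m:=|w|^{\alpha+1}e^{\Re(w)}$: if $m>2|A|$ then $|w^{\alpha+1}e^{w}-A|\ge|A|$, and if $m<|A|/2$ then $|w^{\alpha+1}e^{w}-A|\ge|A|/2$. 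The only delicate regime is $|A|/2\le m\le2|A|$. Since $R_n\le|w|\le R_n\sqrt{1+M_\beta^2}$, that constraint forces $\Re(w)=-(\alpha+1)\ln R_n+O(1)$, so $|\Re(w)|=O(\ln R_n)=o(R_n)$; therefore $w/R_n\to\pm\iunit$ and $\arg(w)\to\pm\pi/2$, uniformly over the part of the set in this regime, whence $\arg(w^{\alpha+1}e^{w})\to\pm(\alpha+1)\pi/2\pmod{2\pi}$.

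To finish, note that since $1<\alpha<2$, each of $(\alpha+1)\pi/2$ and $-(\alpha+1)\pi/2$ lies at distance $(\alpha-1)\pi/2>0$ from $\{0,\pi\}$ on the circle $\Reals/2\pi\Reals$; as $A$ is a nonzero real, $\arg A\in\{0,\pi\}$, so for all sufficiently large $n$ the part in the delicate regime satisfies $|w^{\alpha+1}e^{w}|\in[|A|/2,2|A|]$ and $\mathrm{dist}(\arg(w^{\alpha+1}e^{w}),\arg A)\ge(\alpha-1)\pi/4$. The set of $\zeta\in\Coms$ with $|\zeta|\in[|A|/2,2|A|]$ and $\mathrm{dist}(\arg\zeta,\arg A)\ge(\alpha-1)\pi/4$ is compact and does not contain $A$, hence has positive distance $c_0$ to $A$, so $|w^{\alpha+1}e^{w}-A|\ge c_0$ on that part. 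Combining the three regimes, $\inf_{z\in C_{R_n,2}}|z^{\ralph+1}\exp(z^{\ralph})-A|\ge\min(|A|/2,c_0)>0$ for all large $n$, which is \eqref{e:dist}. The main obstacle, and the place where the hypothesis $R_n=2\pi n$ is indispensable, is controlling $\arg(w^{\alpha+1}e^{w})$ in the moderate regime: without killing the oscillatory factor $e^{\iunit\,\mathrm{Im}(w)}$ one cannot keep this argument away from $0$ and $\pi$ as $n$ varies, and the bound would break along a subsequence.
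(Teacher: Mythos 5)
Your proposal is correct and, after the change of variable $w=z^{1/\alpha}$, it is essentially the paper's argument: both exploit $R_n=2\pi n$ to make $e^{z^{1/\alpha}}$ a positive real on $C_{R_n,2}$, and both show that whenever the modulus of $z^{1/\alpha+1}e^{z^{1/\alpha}}$ stays comparable to $|A|$ its argument is forced toward $\pm(\alpha+1)\pi/2$, which is bounded away from $\{0,\pi\}$ because $\alpha\in(1,2)$, so the value cannot approach a nonzero real $A$. The paper runs this as a proof by contradiction in the $\theta$-parametrization (modulus convergence forces $\cos(\theta_n/\alpha)\to0$), while you give a direct, uniform lower bound via the modulus regimes, but the underlying mechanism is the same.
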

  
  Assuming the lemma is true for now, let $A = \alpha^2(\alpha-1)/
  \Gamma(2-\alpha)$.  By \eqref{e:asym-infty},
  \begin{align*}
    E_{\alpha, \alpha}(z)
    =
    \alpha^{-1} z^{-2} [z^{\ralph+1} \exp(z^\ralph) - A] +
    O(|z|^{-3}).
  \end{align*}
  Then by Lemma \ref{l:contour}, there is $\rx>0$, such that for all
  large $n$ and $z\in C_{R_n,2}$, $E_{\alpha, \alpha}(z) \ge\rx
  |z|^{-2}$.  Let $m_0 = \sup_{\pi/2\le |\theta|\le\pi}
  \sigma(\theta)$.  Then by $|z|\le (m_0 R_n)^\alpha$,
  \begin{align*}
    E_{\alpha,  \alpha}(z) \ge\rx m^{-2\alpha}_0 R^{-2\alpha}_n.
  \end{align*}
  On the other hand, again by \eqref {e:asym-infty},
  \begin{align*}
    |E_{\alpha,
      \alpha}(s z)|\le
    E_{\alpha, \alpha}(|z|) \le E_{\alpha, \alpha}(m^\alpha_0
    R^\alpha_n) = O(R^{1-\alpha}_n \exp(m_0 R_n)).
  \end{align*}
  Combining with the lower bound, this implies 
  \begin{align} \label{e:contour2}
    \sup_{z\in C_{R_n,2}} |H_s(z)|
    = O(R^{1+\alpha}_n e^{m_0 R_n}), \quad n\toi.
  \end{align}
  
  Let $D_R$ be the domain bounded by $C_R$ and $\{\iunit\theta:
  |\theta| \le (R\sigma_0)^\alpha\}$.  Let $t>0$.  If $C_R\cap \Scr
  Z_{\alpha,\alpha} = \emptyset$, then by \eqref{e:FT}
  and residual theorem,
  \begin{align} \nonumber
    \nth{2\pi}
    \int^{(R\sigma_0)^\alpha}_{-(R\sigma_0)^\alpha}
    \FT{\mu_s}(\theta) e^{-\iunit\theta t}\,\dd\theta 
    &=
    \nth{2\pi\iunit}
    \int^{\iunit (R\sigma_0)^\alpha}_{-\iunit (R\sigma_0)^\alpha}
    H_s(z) e^{t z}\,\dd z
    \\\label{e:ift}
    &=
    \nth{2\pi\iunit}
    \int_{C_R} H_s(z) e^{t z}\,\dd z -
    \sum_{\root\in D_R\cap \Scr Z_{\alpha,\alpha}}
    \res(H_s(z) e^{t z}, \root).
  \end{align}
  Consider the contour integral along $C_R$.  For $z = r
  e^{\iunit\theta}\in C_R$ with $\theta = \arg z$, by $\pi/2\le
  |\theta|\le \pi$, $|e^{t z}|= e^{r t \cos\theta} \le 1$.  Then by
  \eqref{e:contour1},
  \begin{align*}
    \Abs{\int_{C_{R,1}} H_s(z) e^{t z}\,\dd z}
    &\le
    \text{Length}(C_{R,1})\times O(e^{-\lambda(1-s^\ralph) R})
    \\
    &=
    O(1) R^\alpha e^{-\lambda(1-s^\ralph) R}.
  \end{align*}
  Furthermore, if $z\in C_{R,2}$, then by $\beta\le |\theta| \le\pi$
  and $r\ge R^\alpha$, $|e^{t z}| \le e^{- b_0 R^\alpha t}$, where
  $b_0 = - \cos\beta>0$.  Then by \eqref{e:contour2},
  \begin{align} \nonumber
    \Abs{\int_{C_{R_n,2}} H_s(z) e^{t z}\,\dd z}
    &\le
    \text{Length}(C_{R_n,2}) \times O(R^{1+\alpha}_n e^{m_0 R_n - b_0
      R^\alpha_n t})
    \\\label{e:contour3}
    &=
    O(1) R^{1+2\alpha}_n e^{m_0 R_n - b_0 R^\alpha_n t}.
  \end{align}
  By $\alpha>1$, combining the above two bounds yields
  \begin{align*}
    \int_{C_{R_n}} H_s(z) e^{t z}\,\dd z\to0.
  \end{align*}
  Then by the Fourier inversion \eqref{e:exit-lower-ift} and
  \eqref{e:ift},
  \begin{align*}
    \psi_s(t) =
    \lim_{n\toi}
    \sum_{\root\in D_{R_n}\cap \Scr Z_{\alpha,\alpha}}
    \res(H_s(z) e^{t z}, \root), \quad t>0.
  \end{align*}
  
  The above formula is proved for $s\in (0,1)$.  For $s=0$, the
  formula can be similarly proved.  To complete the proof, it only
  remains to show that the series on the \rhs of
  \eqref{e:residual} converges absolutely.  Since in any bounded
  domain there are only a finite number of roots of $E_{\alpha,
    \alpha}(z)$, it suffices to show that for a large enough $M>0$, 
  \begin{align*}
    \sum_{\root\in \Scr Z_{\alpha,\alpha}\setminus U_M(0)}
    |\res(H_s(z) e^{t z}, \root)|<\infty.
  \end{align*}
  
  By Theorems 2.1.1 and Chapter 6 of \cite {popov:13:jmathsci}, $M>0$
  can be chosen such that all elements in $\Scr Z_{\alpha,\alpha}
  \setminus U_M(0)$ are not real and are simple roots of $E_{\alpha,
    \alpha}$.  Then for each $\root\in \Scr Z_{\alpha,
    \alpha}\setminus U_M(0)$,
  \begin{align*}
    \res(H_s(z) e^{t z}, \root)
    =
    \res\Grp{
      \frac{E_{\alpha, \alpha}(s z) e^{t z}}{E_{\alpha, \alpha}(z)},
      \root
    }
    =
    \frac{E_{\alpha, \alpha}(s\root) e^{t\root}}
    {E'_{\alpha,\alpha}(\root)}. 
  \end{align*}
  Letting $z = \root$ in the following identity (\cite
  {popov:13:jmathsci}, p.~333)
  \begin{align*}
    \alpha z E'_{\alpha, \alpha}(z) + (\alpha-1) E_{\alpha, \alpha}(z)
    =
    E_{\alpha, \alpha-1}(z),
  \end{align*}
  one gets $E'_{\alpha, \alpha}(\root) = E_{\alpha,
    \alpha-1}(\root)/(\alpha\root)$ and hence
  \begin{align} \label{e:ML-recursive}
    \res(H_s(z) e^{t z}, \root)
    = 
    \frac{\alpha \root E_{\alpha, \alpha}(s\root)e^{t\root}}
    {E_{\alpha,\alpha-1}(\root)}.
  \end{align}
  By $E_{\alpha,\alpha}(\root)=0$ and \eqref{e:asym-infty},
  \begin{align*}
    \alpha^{-1} \root^{\ralph-1} \exp(\root^\ralph)
    = 
    \frac{\alpha(\alpha-1) \root^{-2}}{\Gamma(2-\alpha)}
    + O(|\root|^{-3}).
  \end{align*}
  On the other hand, by Theorems 1.2.1, 1.4.2, and 1.5.1 in
  \cite{popov:13:jmathsci},
  \begin{align*}
    E_{\alpha,\alpha-1}(\root)
    =
    \alpha^{-1} \root^{2/\alpha-1} \exp(\root^\ralph) +
    \frac{\alpha(\alpha^2-1)\root^{-2}}{\Gamma(2-\alpha)}
    + O(|\root|^{-3}).
  \end{align*}
  Combining the two displays yields $E_{\alpha,\alpha-1}(\root) \asymp
  \root^{\ralph-2}$.  Then by \eqref{e:ML-recursive},
  \begin{align*}
    |\res(H_s(z) e^{t z},\root)|
    =
    O(1) |\root^{3-\ralph} E_{\alpha, \alpha}(s \root) e^{t\root}|.
  \end{align*}
  From \eqref{e:asym-infty}, as $|\root|\toi$,
  \begin{align*}
    |E_{\alpha,\alpha}(s
    \root)| \le E_{\alpha, \alpha}(|\root|) = O(1) |\root|^{\ralph-1}
    \exp(|\root|^\ralph).
  \end{align*}
  On the other hand, let $\root = r e^{\iunit \theta}$ with $\theta =
  \arg\root$.  By $\alpha\pi/2 < |\theta|\le \pi$, $|e^{\root t}| =
  e^{t r\cos\theta} \le \exp(-\lambda |\root| t)$, where $\lambda =
  -\cos(\alpha\pi/2)>0$.  Then
  \begin{align*}
    |\res(H_s(z) e^{t z},\root)|
    =
    O(1) |\root|^2 e^{|\root|^\ralph - \lambda t |\root|}
    =
    O(1) e^{-\eta|\root|}
  \end{align*}
  for some $\eta = \eta(t)>0$.  By \eqref{e:zero-asym}, for $M>0$
  large enough, all $\root\in \Scr Z_{\alpha,\alpha} \setminus U_M(0)$
  can be enumerated as $\root_{\pm n}$, $n=N$, $N+1$, \ldots, with
  $N\ge1$ being some large integer, such that $|\root_{\pm n}| \asymp
  n^\alpha$.   This combined with the above display then yields the
  desired absolute convergence.
\end{proof}

\begin{proof}[Proof of Lemma \ref{l:contour}]
  For $z = [R \sigma(\theta)]^\alpha e^{\iunit\theta}\in C_{R,2}$ with
  $\theta = \arg z$,
  \begin{align*}
    z^{\ralph+1} \exp(z^\ralph)
    &=
    [R \sigma(\theta)]^{1+\alpha} e^{\iunit(\ralph+1)\theta} 
    \exp\{R \sigma(\theta) e^{\iunit\theta/\alpha}\}
    \\
    &=
    [R \sigma(\theta)]^{1+\alpha} e^{R \sigma(\theta)
      \cos(\theta/\alpha)}
    e^{\iunit[(\ralph+1)\theta + R \sigma(\theta) \sin(\theta/\alpha)]}
    \\
    &=
    [R \sigma(\theta)]^{1+\alpha} e^{R \sigma(\theta)
      \cos(\theta/\alpha)}
    e^{\iunit[(\ralph+1)\theta + R\text{sign}(\theta)]}.
  \end{align*}
  Put $a(\theta, R)=[R \sigma(\theta)]^{1+\alpha} e^{R \sigma(\theta)
    \cos(\theta/\alpha)}$.  Then for $z\in C_{R_n,2}$, by $R_n = 2\pi
  n$, $z^{\ralph+1} \exp(z^\ralph) = a(\theta, R_n)  e^{\iunit(\ralph
    + 1)\theta}$.  If there were $z_n = [R \sigma(\theta_n)]^\alpha
  e^{\iunit \theta_n} \in C_{R_n,2}$ such that  $z^{\ralph + 1}_n
  \exp(z^\ralph_n) \to A$, then taking modulus, $a(\theta_n, R_n) =
  [R_n \sigma(\theta_n)]^{1 + \alpha} e^{R_n \sigma(\theta_n)
    \cos(\theta_n/\alpha)} \to |A|>0$.  By $|R_n \sigma(\theta_n)|
  \toi$, it follows that $\cos(\theta_n/\alpha)\to 0$, as any
  sequence $n$ with $R_n \sigma(\theta_n) \cos(\theta_n/\alpha)\toi$
  (resp.\ $-\infty$) has $a(\theta_n, R_n)\toi$ (resp.\ 0).
  Because $|\theta_n|/\alpha \in (\pi/(2\alpha), \pi/\alpha]$, this
  implies $\theta_n/\alpha = k_n\pi/2 + \rx_n$ with $k_n = \pm1$ and
  $\rx_n\to0$.  But then
  \begin{align*}
    z^{\ralph+1}_n \exp(z^\ralph_n)  =
    a(\theta_n, R_n) e^{\iunit (\ralph + 1) \theta_n} = |A|e^{\iunit(1 +
      \alpha) k_n\pi/2} + o(1)\not\to A,
  \end{align*}
  a contradiction. 
\end{proof}

\subsection{Alternative expression and asymptotics}
\label{ss:exit-lower-alt}
We first consider \eqref{e:k-alt}.  Let
\begin{align*}
  M(t) = \sumzi n f_{-b}*(\delta + f_c
  *f_{-c})*(f_{b+c}*f_{-b-c})^{*n}(t).
\end{align*}
Given $q>0$, by Fubini's theorem, $\LT M(q) =\LT{f_{-b}}(q) [1 +
\LT{f_c}(q) \LT{f_{-c}}(q)] \sumzi n \LT{f_{b+c}}(q)^n \LT{f_{-b
    -c}}(q)^n$, which is finite due to $\LT{f_{-b-c}}(q) < 1$.  Then
$M(t)<\infty$ a.e.  As a result, the \rhs of \eqref{e:k-alt} converges
a.e.  Denote it by $F(t)$  for now.  By dominated convergence and the
formula for $\LT M(q)$,
\begin{align*}
  \LT F(q)
  =
  \frac{\LT{f_{-b}}(q) [1- \LT{f_c}(q)  \LT{f_{-c}}(q)]}
  {1-\LT{f_{b+c}}(q) \LT{f_{-b-c}}(q)}.
\end{align*}
On the other hand, it is known that \cite{doney:91:jlms} (also cf.\
\cite{kyprianou:14:sh}, p.~253) 
\begin{align*}
  \LT{f_x}(q) = e^{x q^\ralph} - \alpha x^{\alpha-1}_+ q^{1 - \ralph}
  E_{\alpha, \alpha}(x^\alpha_+ q), \quad x\in\Reals,\ q>0.
\end{align*}
Then for $x\ge0$, $\LT f_{-x}(q) = e^{-x q^\ralph}$
and $1 - \LT f_x(q) \LT f_{-x}(q) = \alpha x^{\alpha-1} q^{1-\ralph} 
E_{\alpha,\alpha}(x^\alpha q) e^{-x q^\ralph}$.  Plugging the
identities into the display, it is seen that $\LT F(q)$ equals the
\rhs of \eqref{e:LT-k}, giving $F(t) = k_{-b,c}(t)$.

Based on the alternative expression, it is quite easy to get that as
$t\dto0$, 
\begin{align} \label{e:asym-k-0}
  k_{-b,c}(t) \sim f_{-b}(t),
\end{align}
in particular, by Eq.~(14.35) in \cite{sato:99:cup}, $\ln k_{-b,c}(t)
\sim -C b^{\alpha/(\alpha-1)} t^{-1/(\alpha-1)}$, where $C>0$ is
constant.  First, by \eqref{e:k-k}, $0<f_{-b}(t) - k_{-b,c}(t) =
(k_{c,-b}*f_{-b - c})(t) < (f_c*f_{-b-c})(t) = (u*f_{-b})(t)$, where
$u = f_c*f_{-c}$ is a \pdf and we have used $k_{c,-b} < f_c$ and
$f_{-b-c} = f_{-b} * f_{-c}$.  Next,
\begin{align*}
  (u*f_{-b})(t)
  =
  \int^t_0 u(s) f_{-b}(t-s)\,\dd s
  \le
  \sup_{s\le t} f_{-b} \times \int^t_0 u
  =
  o(1) \sup_{s\le t} f_{-b}, \quad t\dto 0.
\end{align*}
Since $f_{-b}$ is unimodal (\cite{sato:99:cup}, p.~416), for all $t>0$
small enough, $\sup_{s\le t} f_{-b} = f_{-b}(t)$, implying
\eqref{e:asym-k-0}.

Although Proposition \ref{p:exit-lower} gives a series expression of
$\mu_s$, it does not provide the radius of convergence of
$\LT{\mu_s}$, defined as $\sup\{r>0: e_z\in L^1(\mu_s)\,\forall z\in
U_r(0)\}$, where $e_z$ is the function $t\mapsto e^{-t z}$.  This is
also related to the tail of $k_{-b,c}(t)$ as  $t\toi$.  We have
the following.

\begin{prop} \label{p:root}
  Let $-\varrho$ be the largest real root of $E_{\alpha,
    \alpha}(z)$, where $\varrho>0$.  Then given $s\in [0,1)$,
  $\LT{\mu_s}(x) < \infty$ for $x\in(-\varrho, \infty)$ and
  $\LT{\mu_s}(x)\uto \LT{\mu_s}(-\varrho) = \infty$ as $x\dto
  -\varrho$.  In particular, the radius of convergence of $\LT{\mu_s}$
  is $\varrho$ and $\Scr Z_{\alpha,\alpha} \subset \{z: |\arg z| >
  \alpha\pi/2,\, \Re(z)\le -\varrho\}$.
\end{prop}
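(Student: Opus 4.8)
The plan is to identify $\LT{\mu_s}$, as a function on $\Reals$, with the meromorphic function $H_s$ on $(-\varrho,\infty)$, to pin the abscissa of convergence of $\LT{\mu_s}$ down to exactly $-\varrho$, and then to read off both the radius of convergence and the inclusion for $\Scr Z_{\alpha,\alpha}$. \emph{Step 1: real behaviour of $E_{\alpha,\alpha}$ and $H_s$.} By item~3) in the list of properties above, $-\varrho$ is well defined and $E_{\alpha,\alpha}$ has no zero in $(-\varrho,\infty)$; since $E_{\alpha,\alpha}(x)=\sumzi n x^n/\Gamma(\alpha n+\alpha)>0$ for $x\ge 0$ and $E_{\alpha,\alpha}(0)=1/\Gamma(\alpha)>0$, continuity forces $E_{\alpha,\alpha}>0$ on $(-\varrho,\infty)$ and $E_{\alpha,\alpha}(-\varrho)=0$. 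For $s\in[0,1)$ and $x>-\varrho$ one also has $sx>-\varrho$ (trivially if $x\ge 0$, and $x<sx<0$ if $-\varrho<x<0$), so $H_s(x)=E_{\alpha,\alpha}(sx)/E_{\alpha,\alpha}(x)$ is finite, positive and real-analytic on $(-\varrho,\infty)$; moreover $E_{\alpha,\alpha}(-s\varrho)>0$ while $E_{\alpha,\alpha}(x)\dto 0$ through positive values as $x\dto-\varrho$, so $H_s(x)\uto\infty$ and $-\varrho$ is a genuine (non-removable) pole of the meromorphic function $H_s$, whose only possible poles lie in $\Scr Z_{\alpha,\alpha}$.

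\emph{Step 2: the abscissa of convergence is $-\varrho$.} Let $x_c\le 0$ be the abscissa of convergence of $\LT{\mu_s}$, and set $\Phi(z)=\int_{[0,\infty)}e^{-tz}\,\mu_s(\dd t)$, which is analytic on $\{\Re z>x_c\}$. By \eqref{e:mu_s} (which holds for $\Re z\ge 0$) together with the identity theorem on the connected open set $\{\Re z>x_c\}\setminus\Scr Z_{\alpha,\alpha}$, one has $\Phi=H_s$ there. If $x_c<-\varrho$, then $-\varrho\in\{\Re z>x_c\}$, so $\Phi$ is finite near $-\varrho$, contradicting the pole of $H_s$ at $-\varrho$; hence $x_c\ge-\varrho$, and in particular $\LT{\mu_s}=\Phi=H_s$ on $(x_c,0]\subset(-\varrho,0]$. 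Consequently the Taylor series of $H_s(-y)$ at $y=0$ is $\sumzi n (M_n/n!)y^n$ with $M_n=(-1)^n H\Sp n_s(0)=\int t^n\,\mu_s(\dd t)\ge 0$ the moments of $\mu_s$, and (by Tonelli, $\sumzi n(M_n/n!)y^n=\int e^{ty}\,\mu_s(\dd t)=\LT{\mu_s}(-y)$ for $y\ge 0$) its radius of convergence equals $-x_c$, which is positive since $H_s$ is analytic at $0$. By the Vivanti--Pringsheim theorem for power series with non-negative coefficients, $y=-x_c$ is a singular point of this series; since the series is analytic throughout $\{|y|<-x_c\}$, the identity theorem shows it represents $H_s(-y)$ on that whole disc, so $H_s$ has a non-removable singularity — hence a pole — at $x=x_c$, giving $E_{\alpha,\alpha}(x_c)=0$ and therefore $x_c\le-\varrho$. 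Thus $x_c=-\varrho$.

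\emph{Step 3: conclusions.} From $x_c=-\varrho$ we get $\LT{\mu_s}(x)<\infty$ for $x\in(-\varrho,\infty)$ with $\LT{\mu_s}=H_s$ there; by monotone convergence $\LT{\mu_s}(-\varrho)=\lim_{x\dto-\varrho}\LT{\mu_s}(x)=\lim_{x\dto-\varrho}H_s(x)=\infty$; and $e_z\in L^1(\mu_s)$ iff $\Re z>-\varrho$, so the radius of convergence of $\LT{\mu_s}$ is $\varrho$. For the inclusion, the bound $|\arg z|>\alpha\pi/2$ is item~2), so it remains to rule out a root with $\Re z>-\varrho$. By \eqref{e:zero-asym} the roots of large modulus satisfy $\Re\root_{\pm n}=[1+o(1)](2\pi n)^\alpha\cos(\alpha\pi/2)\to-\infty$ (as $\cos(\alpha\pi/2)<0$), so $S:=\Scr Z_{\alpha,\alpha}\cap\{\Re z>-\varrho\}$ is finite; if $S\ne\emptyset$, pick $\root^*\in S$ of smallest modulus. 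Were $\root^*$ a removable singularity of $H_s$, then $E_{\alpha,\alpha}(s\root^*)=0$, i.e.\ $s\root^*\in\Scr Z_{\alpha,\alpha}$; as item~2) gives $|\arg\root^*|>\alpha\pi/2>\pi/2$ and hence $\Re\root^*<0$, we get $\Re(s\root^*)=s\,\Re\root^*>\Re\root^*>-\varrho$, so $s\root^*\in S$ with $|s\root^*|<|\root^*|$, contradicting minimality; hence $\root^*$ is a genuine pole of $H_s$, contradicting that $\Phi=H_s$ is finite near $\root^*\in\{\Re z>-\varrho=x_c\}$. (For $s=0$ every zero of $E_{\alpha,\alpha}$ is already a genuine pole of $H_0$, so the minimality step is unnecessary.) Thus $S=\emptyset$.

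\emph{Expected main obstacle.} The delicate points are the bookkeeping around singularities in Steps 2--3: identifying $-x_c$ with the radius of convergence of the moment series and invoking Vivanti--Pringsheim to force a pole of $H_s$ at a negative real point (whence $x_c\le-\varrho$), and making sure in Step 3 that a hypothetical zero of $E_{\alpha,\alpha}$ with $\Re>-\varrho$ is not cancelled in $H_s$ by a zero of $E_{\alpha,\alpha}(s\,\cdot)$ — which is exactly what choosing $\root^*$ of minimal modulus prevents.
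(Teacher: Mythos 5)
Your proof is correct, but it follows a genuinely different route from the paper's. The paper starts from the root $\root_0$ of $E_{\alpha,\alpha}$ with the largest real part, sets $a=-\Re(\root_0)$, invokes (as a stated remark) the Laplace-transform version of the singularity-at-the-abscissa theorem to conclude that the domain of $\LT{\mu_s}$ contains $\{\Re(z)>-a\}$, and then lets $z\to\root_0$ along the ray from $0$: since $|\LT{\mu_s}(z)|\le\LT{\mu_s}(\Re(z))$ and $|H_s(z)|\toi$ there, it gets $H_s(x)\toi$ as $x\dto -a$ on the real axis, whence $E_{\alpha,\alpha}(-a)=0$ (the numerator tends to $E_{\alpha,\alpha}(-sa)>0$), so $a=\varrho$ and the inclusion $\Re(z)\le-\varrho$ falls out of the very choice of $\root_0$. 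You instead work on the real axis with the abscissa of convergence $x_c$: you re-derive the needed ``abscissa is a singular point'' fact in this special case from the moment power series via Tonelli and the classical Vivanti--Pringsheim theorem, forcing $E_{\alpha,\alpha}(x_c)=0$ and hence $x_c=-\varrho$, and you then prove $\Scr Z_{\alpha,\alpha}\subset\{\Re(z)\le-\varrho\}$ by a separate finite-set, minimal-modulus argument ruling out cancellation of a zero of $E_{\alpha,\alpha}(z)$ by a zero of $E_{\alpha,\alpha}(sz)$. What your route buys is self-containedness (no appeal to the Laplace--Stieltjes singularity theorem beyond the power-series Pringsheim) and an explicit treatment of the cancellation issue, which the paper leaves implicit (its step $|H_s(\root_0)|=\infty$ tacitly uses $\Re(s\root_0)>-a$ so that $E_{\alpha,\alpha}(s\root_0)\ne0$); what the paper's route buys is brevity, since choosing the root of maximal real part delivers the inclusion and the identification $a=\varrho$ in one stroke. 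Minor cosmetic point: your ``$H_s(x)\uto\infty$'' in Step 1 is only a limit statement at that stage (monotonicity of $\LT{\mu_s}$ on the reals is what justifies the arrow, as you correctly use later via monotone convergence).
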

\begin{proof}
  Recall that for a measure $\nu$ on $[0,\infty)$ with finite total
  mass, the domain of $\LT{\mu_s}$ contains $\{z: \Re(z)\ge 0\}$, and
  if $\LT\nu(z)$ can be analytically extended to $U_r(0)\cap \{z:
  \Re(z) < 0\}$ for some $r>0$, then the domain of $\LT\nu$ contains
  $\{z: \Re(z)>-r\}$ and $\LT\nu$ is analytic in the region.

  Let $\root_0$ be a root of $E_{\alpha,\alpha}(z)$ with the largest
  real part.  Put $a = -\Re(\root_0)$.  Clearly, $\varrho \ge a$.
  From $\Scr Z_{\alpha,\alpha} \subset \{z: |\arg(z)| >
  \alpha\pi/2\}$ (\cite{popov:13:jmathsci}, Theorem 4.2.1),
  $a>0$.  Since $E_{\alpha, \alpha}(z) \ne0$ for any $z$ with
  $\Re(z)>-a$, $H_s(z)$ is analytic in $U_a(0)$.  Since, by
  \eqref{e:H_s(q)} and \eqref  {e:mu_s}, $\LT{\mu_s}(z) = H_s(z)$ for
  $z$ with $\Re(z) \ge0$, from the above remark, the domain of
  $\LT{\mu_s}$ contains $\{z: \Re(z) >
  -a\}$.  Let $z\to\root_0$ along the ray from 0 to $\root_0$. Then
  $|\LT{\mu_s}(z)| = |H_s(z)| \to |H_s(\root_0)|= \infty$.  By
  $|\LT{\mu_s}(z)| \le \LT{\mu_s}(\Re(z))$, it follows that if $x\in
  \Reals$ and $x\dto -a$, then $\LT{\mu_s}(x) = H_s(x)\toi$.  Since
  $E_{\alpha,\alpha}(x) \to E_{\alpha,\alpha}(-sa)>0$, then
  $E_{\alpha, \alpha}(-a)=0$, and so $a = \varrho$.  Thus, $\Scr
  Z_{\alpha, \alpha} \subset \{z: \Re(z)\le -\varrho\}$.  The proof is
  complete.
\end{proof}

Combining Propositions \ref{p:exit-lower} and \ref{p:root}, if the
multiplicity of $-\varrho$ is $n\ge 1$, then $k_{-b,c}(t)$ decreases
exponentially fast with
\begin{align*}
  \limsup_{t\toi}
  \frac{\ln k_{-b,c}(t)}{t} = -\frac{\varrho}{(b+c)^\alpha}.
\end{align*}
However, the exact asymptotic of
$k_{-b,c}(t)$ depends on the multiplicity of $-\varrho$ as well as
other roots of $E_{\alpha, \alpha}(z)$ on the line $\Re(z) =
-\varrho$, if there are any.

\section{Distribution of first exit at upper end} \label{s:upper}
The main result of this section is Theorem \ref{t:exit-upper} below.
It provides a factorization of the joint sub-\pdf of the time
$T_c$, the undershoot $X_{T_c-}$, and the jump $\Delta_{T_c} = X_{T_c}
- X_{T_c-}$ when $X$ makes first exit from $[-b,c]$ by jumping upward
across $c$ before hitting $-b$.  The following function plays an
important role.  For $x\in (-b,c)$ and $t>0$, define
\begin{align} \label{e:no-exit}
  l_{x,-b,c}(t)
  =
  \frac{\pr\{X_t\in\dd x,\, X_s\in (-b,c)\,\forall s\le t\}}{\dd x}.
\end{align}
While the function can be defined for any process that has a \pdf at
any time point, in the case of a spectrally positive strictly stable
process, it has an explicit representation. 

To start with, it is known that for $q\ge 0$ (\cite{kyprianou:14:sh},
Theorem 8.7)
\begin{align} \nonumber %\label{e:LT-l0}
  \LT{l_{x, -b, c}}(q)
  &=
  \frac{W\Sp q(c) W\Sp q(b+x)}{W\Sp q(b+c)} - W\Sp q(x_+)
  \\\label{e:LT-l}
  &=
  \frac{c^{\alpha-1} (b+x)^{\alpha-1}}{(b+c)^{\alpha-1}}
  \frac{
    E_{\alpha, \alpha}(c^\alpha q)
    E_{\alpha, \alpha}((b+x)^\alpha q)
  }{
    E_{\alpha, \alpha}((b+c)^\alpha q)
  }
  - x^{\alpha-1}_+ E_{\alpha,\alpha}(x^\alpha_+ q).
\end{align}

\begin{theorem} \label{t:exit-upper}
  Fix $b>0$ and $c>0$.  For $x\in \Reals$,
  \begin{align} \label{e:undershoot}
    \pr\{T_c<\tau_{-b},\, X_{T_c-} \in\dd x\}
    =
    \cf{x\in (-b,c)}
    \frac{|\sin(\alpha\pi)|}\pi
    \Sbr{
      \frac{c^{\alpha-1} (b+x)^{\alpha-1}}{(b+c)^{\alpha-1}}
      - x^{\alpha-1}_+ 
    }  \frac{\dd x}{(c-x)^\alpha},
  \end{align}
  and conditional on $T_c < \tau_{-b}$ and $X_{T_c-} = x\in (-b,c)$,
  $\Delta_{T_c}$ and $T_c$ are independent, such that $\Delta_{T_c}$
  follows the Pareto distribution with \pdf
  \begin{align*}
    \pi(u)
    =
    \alpha (c-x)^\alpha u^{-\alpha-1} \cf{u>c-x}
  \end{align*}
  and $T_c$ has \pdf
  \begin{align} \label{e:exit-time-upper}
    p(t) = \Gamma(\alpha)\Sbr{
      \frac{c^{\alpha-1} (b+x)^{\alpha-1}}{(b+c)^{\alpha-1}}
      - x^{\alpha-1}_+ 
    }^{-1} l_{x, -b,c}(t)
  \end{align}
  with $l_{x,-b,c}(t)$ having the following expression
  \begin{align} \label{e:residual2}
    l_{x,-b,c}(t)
    =
    \frac{c^{\alpha-1} (b+x)^{\alpha-1}}{(b+c)^{\alpha-1}}
    \sum_{\root\in \Scr Z_{\alpha,\alpha}} 
    \res\Grp{
      \frac{
        E_{\alpha,\alpha}(c^\alpha z) E_{\alpha,\alpha}((b+x)^\alpha
        z)
      }{
        E_{\alpha,\alpha}((b+c)^\alpha z)
      } e^{t z},\, \frac\root{(b+c)^\alpha}
    }.
  \end{align}
  Furthermore, given $t>0$, the mapping $x\to l_{x,-b,c}(t)$ has an
  analytic extension from $(-b,c)$ to $\Coms\setminus (-\infty, -b]$.
\end{theorem}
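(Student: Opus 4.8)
The plan is to prove the theorem in two halves: the probabilistic assertions (the undershoot law, the conditional independence of $\Delta_{T_c}$ and $T_c$, and the two conditional densities) follow from the compensation formula, and the analytic assertions (the series \eqref{e:residual2} and the extension in $x$) follow by running the proof of Eq.~\eqref{e:residual} essentially verbatim.

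For the probabilistic half, note first that by \eqref{e:X} and the \levy--Khintchine representation the \levy measure of $X$ is $\nu(\dd u)=\Gamma(-\alpha)^{-1}u^{-\alpha-1}\,\dd u$ on $(0,\infty)$. Since $X$ has no negative jumps, on $\{T_c<\tau_{-b}\}$ first exit is effected by a single jump, from $X_{T_c-}=x\in(-b,c)$ to $X_{T_c}=x+\Delta_{T_c}>c$, while $X$ stays in $(-b,c)$ throughout $[0,T_c)$. Applying the compensation formula with the predictable integrand $s\mapsto\cf{X_r\in(-b,c)\ \forall r<s}$ and the jump kernel $u\mapsto\cf{X_{s-}+u>c}\,\nu(\dd u)$, and recalling \eqref{e:no-exit}, I would obtain
\begin{align*}
  \pr\{T_c\in\dd t,\ T_c<\tau_{-b},\ X_{T_c-}\in\dd x,\ \Delta_{T_c}\in\dd u\}
  =
  \cf{x\in(-b,c),\ x+u>c}\,l_{x,-b,c}(t)\,\nu(\dd u)\,\dd x\,\dd t .
\end{align*}
Integrating out $u$ via $\int_{c-x}^{\infty}\nu(\dd u)=\Gamma(-\alpha)^{-1}\alpha^{-1}(c-x)^{-\alpha}$ and then $t$ via $\int_0^\infty l_{x,-b,c}(t)\,\dd t=\LT{l_{x,-b,c}}(0)=\Gamma(\alpha)^{-1}[\,c^{\alpha-1}(b+x)^{\alpha-1}(b+c)^{1-\alpha}-x_+^{\alpha-1}\,]$ (from \eqref{e:LT-l} and $E_{\alpha,\alpha}(0)=1/\Gamma(\alpha)$) yields \eqref{e:undershoot}, after rewriting $\alpha^{-1}\Gamma(\alpha)^{-1}\Gamma(-\alpha)^{-1}$ as $|\sin(\alpha\pi)|/\pi$ via $\Gamma(1+\alpha)\Gamma(-\alpha)=-\pi/\sin(\alpha\pi)$. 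The displayed joint density factors into a function of $(t,x)$ times a function of $(u,x)$, so conditionally on $\{T_c<\tau_{-b},\,X_{T_c-}=x\}$ the variables $\Delta_{T_c}$ and $T_c$ are independent; normalizing the two factors gives the Pareto density $\pi(u)$ and, after dividing by $\LT{l_{x,-b,c}}(0)$, the density \eqref{e:exit-time-upper} of $T_c$.

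For \eqref{e:residual2}, substitute $w=(b+c)^\alpha z$, so that \eqref{e:LT-l} becomes $(\text{prefactor})\cdot E_{\alpha,\alpha}(s_1w)E_{\alpha,\alpha}(s_2w)/E_{\alpha,\alpha}(w)-x_+^{\alpha-1}E_{\alpha,\alpha}\Grp{x_+^\alpha w/(b+c)^\alpha}$ with $s_1=(c/(b+c))^\alpha$ and $s_2=((b+x)/(b+c))^\alpha$, both in $(0,1)$ for $x\in(-b,c)$. First, using \eqref{e:asym-infty} to one order past the leading term, one checks that the leading parts $\alpha^{-1}(\cdot)^{\ralph-1}\exp((\cdot)^\ralph)$ of the two summands coincide exactly, because $s_1^{\ralph}+s_2^{\ralph}-1=x/(b+c)$; what remains is $O(|w|^{-2})$ when $x>0$ (the leftover is an exponentially small remainder in the half-plane $\Re(w^\ralph)>0$, times $w^{\ralph-1}e^{xw^\ralph/(b+c)}$), and exponentially small when $x<0$. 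Consequently $\FT{l_{x,-b,c}}\in L^1(\Reals)$ and Fourier inversion applies. Then one runs the inversion integral along the contours $C_{R_n}$ of Lemma \ref{l:contour} in the $w$-plane exactly as in the derivation of \eqref{e:residual} (see \eqref{e:ift}): the two summands are kept together on $C_{R_n,1}$, where the same cancellation makes their sum $O(|w|^{-2})$ while $|e^{tw/(b+c)^\alpha}|\le1$, and are estimated separately on $C_{R_n,2}$, where Lemma \ref{l:contour} bounds $|E_{\alpha,\alpha}(w)|$ below by a constant times $|w|^{-2}$, the numerator and the entire term are $O(\exp(CR_n))$, and $|e^{tw/(b+c)^\alpha}|\le e^{-c'R_n^\alpha t}$, so everything tends to $0$ since $\alpha>1$. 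By the residue theorem $l_{x,-b,c}(t)$ equals the sum of residues at the poles $w=\root$, $\root\in\Scr Z_{\alpha,\alpha}$; the entire term contributes nothing, and reverting to $z$ gives \eqref{e:residual2}. Absolute convergence of that series is shown as at the end of the proof of \eqref{e:residual}: for $\root$ of large modulus $|\arg\root|>\alpha\pi/2$ puts $E_{\alpha,\alpha}(s_1\root),E_{\alpha,\alpha}(s_2\root)$ in the decaying sector, so both are $O(|\root|^{-2})$, while $1/E'_{\alpha,\alpha}(\root)=O(|\root|^{3-\ralph})$ by \eqref{e:ML-recursive} and $|e^{t\root/(b+c)^\alpha}|\le e^{-\lambda t|\root|/(b+c)^\alpha}$ with $\lambda=-\cos(\alpha\pi/2)>0$, making the $\root_{\pm n}$-term $O(|\root_n|^{-1-\ralph}e^{-\lambda t|\root_n|/(b+c)^\alpha})$, summable since $|\root_{\pm n}|\asymp n^\alpha$ by \eqref{e:zero-asym}. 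The borderline case $x=0$ (where the decay on the imaginary axis degrades to $O(|\theta|^{\ralph-1})$) I would settle by letting $x\dto0$: the right side of \eqref{e:residual2} is continuous in $x$ there by the convergence estimate just described, and $l_{x,-b,c}(t)$ is continuous in $x$ by the strong Markov decomposition $l_{x,-b,c}(t)=g_t(x)-\mean[\cf{\min(\tau_{-b},T_c)<t}\,g_{t-\min(\tau_{-b},T_c)}(x-X_{\min(\tau_{-b},T_c)})]$ together with dominated convergence.

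For the analytic extension, fix $t>0$ and read \eqref{e:residual2} as a series in $x$. For $x\in\Coms\setminus(-\infty,-b]$ the factors $(b+x)^{\alpha-1}$ and $E_{\alpha,\alpha}((b+x)^\alpha\root/(b+c)^\alpha)$ are analytic in $x$ (principal branch of $(b+x)^\alpha$), so every residue term is analytic there; and the series converges locally uniformly. Indeed, on a compact $K\subset\Coms\setminus(-\infty,-b]$ one still has $E_{\alpha,\alpha}(s_1\root)=O(|\root|^{-2})$, while now at worst $|E_{\alpha,\alpha}(s_2\root)|=O(|\root|^{\ralph-1}\exp((|b+x|/(b+c))|\root|^{\ralph}))$; combined with $1/E'_{\alpha,\alpha}(\root)=O(|\root|^{3-\ralph})$, $|e^{t\root/(b+c)^\alpha}|\le e^{-\lambda t|\root|/(b+c)^\alpha}$, and $|\root_{\pm n}|^{\ralph}\asymp n$, $|\root_{\pm n}|\asymp n^\alpha$, the $\root_{\pm n}$-term is $O(\exp(C_Kn-c_t'n^\alpha))$, summable uniformly over $K$ since $\alpha>1$. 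Hence the series defines an analytic function of $x$ on $\Coms\setminus(-\infty,-b]$ agreeing with $l_{x,-b,c}(t)$ on $(-b,c)$, which is the asserted extension.

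The step I expect to be the main obstacle is the exact cancellation of the leading exponential growth of $\LT{l_{x,-b,c}}(q)$ for $x\ge0$: it is needed both for the validity of the Fourier inversion and for the $C_{R_n,1}$ bound in the contour integral, and making it precise requires \eqref{e:asym-infty} beyond its leading term and careful bookkeeping of the remainders, which are exponentially small precisely where $\Re(w^\ralph)>0$.
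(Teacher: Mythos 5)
Your probabilistic half coincides with the paper's: the compensation formula yields the factorized joint law of $(T_c, X_{T_c-},\Delta_{T_c})$ on $\{T_c<\tau_{-b}\}$ (the paper's Lemma \ref{l:first-exit}), and \eqref{e:undershoot}, the Pareto law and \eqref{e:exit-time-upper} follow by integrating out $u$ and $t$ with $\LT{l_{x,-b,c}}(0)$, exactly as in the paper; your constant bookkeeping via $\Gamma(1+\alpha)\Gamma(-\alpha)=-\pi/\sin(\alpha\pi)$ agrees with the paper's $(\alpha-1)/[\Gamma(\alpha)\Gamma(2-\alpha)]$. Where you genuinely diverge is \eqref{e:residual2} for $x\in[0,c)$. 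The paper deliberately avoids inverting \eqref{e:LT-l} when $x>0$: it proves \eqref{e:residual2} only for $x\in(-b,0)$ by the contour argument, then transports it to all of $(-b,c)$ by showing that $x\mapsto l_{x,-b,c}(t)$ is analytic on a domain containing $(-b,c)$ (Lemma \ref{l:lhk-renewal} together with Lemma \ref{e:l-extend}, which import the analytic extension of $h_{x,c}$ from \cite{chi:18:tr} and Zolotarev's representation of $g_t$), while the residue series is analytic on $\Coms\setminus(-\infty,-b]$, so the two agree by uniqueness of analytic continuation. You instead invert the full transform directly for $x>0$, exploiting the exact cancellation of the leading parts $\alpha^{-1}q^{\ralph-1}e^{x q^\ralph}$ of the two summands of \eqref{e:LT-l} (your identity $s_1^\ralph+s_2^\ralph-1=x/(b+c)$ is correct, and the algebraic prefactors do match), so that the transform is $O(|\theta|^{-2})$ on the imaginary axis, the $C_{R,1}$ estimate survives because the leftover relative errors carry factors $e^{-\delta\,\Re(w^\ralph)}$ with $\delta>0$ (this is where $x<c$ and $b>0$ are used), and the entire term contributes no residues; the extension statement of the theorem then follows, as in the paper's last step, from locally uniform convergence of the residue series in $x$ on $\Coms\setminus(-\infty,-b]$. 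This buys independence from Lemmas \ref{l:lhk-renewal}--\ref{e:l-extend} (and from the external inputs they use) for establishing the identity on $(-b,c)$; the price is the careful bookkeeping you flag, plus a separate argument at $x=0$, where the transform only decays like $|\theta|^{\ralph-1}$ and inversion in $L^1$ fails. Two small repairs are needed but neither is fatal: (i) your claim $E_{\alpha,\alpha}(s_i\root)=O(|\root|^{-2})$ at the roots overstates the decay — near the rays $|\arg z|=\alpha\pi/2$ one only gets $O(|z|^{\ralph-1})$ — but the paper's cruder bound $|E_{\alpha,\alpha}(s_i\root)|\le E_{\alpha,\alpha}(|\root|)$ suffices, since $|e^{t\root}|\le e^{-\lambda t|\root|}$ absorbs any such growth; (ii) in the $x\dto0$ limit you should justify the domination explicitly, e.g.\ by noting $|x-X_{\min(\tau_{-b},T_c)}|\ge\min(b,c)/2$ for $|x|$ small, which makes the integrand in your strong Markov decomposition uniformly bounded, so dominated convergence indeed gives continuity of $l_{x,-b,c}(t)$ at $x=0$, while continuity of the series side follows from your uniform tail estimates. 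With those details supplied, your route is sound and somewhat leaner than the paper's.
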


From \eqref{e:LT-l}, one may suspect that \eqref{e:residual2} can be
obtained by residual calculus.  However, when $x>0$, the function in
the residuals of \eqref{e:residual2} have the term $x^{\alpha-1}
E_{\alpha, \alpha}(x^\alpha q)$ missing.  A direct inversion of the
Laplace transform \eqref{e:LT-l} when $x>0$ seems to be involved.
Instead, we will first show \eqref{e:residual2} for $x<0$ by residual
calculus, and then establish \eqref{e:residual2} for $x\ge0$ through
analytic extension.

\subsection{Factorization and conditional independence}
\label{ss:factor}
The factorization in Theorem \ref{t:exit-upper} follows from the
next result, which actually holds under much more general assumptions
on a \levy process.
\begin{lemma} \label{l:first-exit}
  Given $b>0$ and $c>0$, for $t>0$,
  \begin{multline} \label{e:joint-exit-up}
    \pr\{T_c<\tau_{-b},\, T_c\in\dd t,\, X_{T_c-}\in\dd x,\,
    \Delta_{T_c}\in \dd u\}
    \\
    =
    \cf{x>-b,\, u>c-x>0}
    \dd t \, \pr\{X_t\in\dd x,\, X_s \in (-b,c)\,\forall s\le t\}
    \,\Pi(\dd u).
  \end{multline}
\end{lemma}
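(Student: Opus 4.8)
The plan is to prove Lemma~\ref{l:first-exit} via the compensation formula (master formula) for Poisson point processes applied to the jumps of $X$. Since $X$ is spectrally positive, the only way it can exit $[-b,c]$ at the upper end is by a jump across $c$; the crossing happens at an instant of the jump point process, so the event $\{T_c < \tau_{-b}\}$ together with the triple $(T_c, X_{T_c-}, \Delta_{T_c})$ is measured entirely on the jumps. First I would write, for a generic nonnegative measurable test function $G$,
\begin{align*}
  \mean\Sbr{G(T_c, X_{T_c-}, \Delta_{T_c}); T_c<\tau_{-b}}
  =
  \mean\Sbr{\sum_{t>0} G(t, X_{t-}, \Delta X_t)\,
    \cf{X_s\in (-b,c)\ \forall s< t,\ X_{t-}+\Delta X_t > c}},
\end{align*}
using that $X_{t-}+\Delta X_t>c\ge X_{s}$ for $s<t$ forces $t$ to be the exit time and the path stays strictly inside $(-b,c)$ before $t$ (here downward regularity and continuity of downward movement ensure $\{X_s\in(-b,c)\ \forall s<t\}$ is the same event as $\{X_s\in(-b,c)\ \forall s\le t\}$ off a null set, since $\tau_{-b}=\inf\{s:X_s<-b\}$).

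The key step is to apply the compensation formula. The jumps of $X$ form a Poisson point process on $(0,\infty)\times(0,\infty)$ with intensity $\dd t\,\Pi(\dd u)$, where $\Pi$ is the \levy measure of $X$, and the indicator $\cf{X_s\in(-b,c)\ \forall s<t}$ together with the value $X_{t-}$ is predictable. Hence the compensation formula gives
\begin{align*}
  \mean\Sbr{G(T_c,X_{T_c-},\Delta_{T_c});T_c<\tau_{-b}}
  =
  \mean\Sbr{\intzi \cf{X_s\in(-b,c)\ \forall s\le t}
    \int_{(c-X_t,\infty)} G(t,X_t,u)\,\Pi(\dd u)\,\dd t}.
\end{align*}
Then I would rewrite the outer expectation using the definition \eqref{e:no-exit} of $l_{x,-b,c}$: by Fubini,
\begin{align*}
  \mean\Sbr{\cf{X_s\in(-b,c)\ \forall s\le t}\,h(X_t)}
  =
  \int_{-b}^{c} h(x)\, l_{x,-b,c}(t)\,\dd x
\end{align*}
for nonnegative measurable $h$; applying this with $h(x)=\int_{(c-x,\infty)}G(t,x,u)\,\Pi(\dd u)$ yields
\begin{align*}
  \mean\Sbr{G(T_c,X_{T_c-},\Delta_{T_c});T_c<\tau_{-b}}
  =
  \intzi\int_{-b}^{c}\int_{(c-x,\infty)} G(t,x,u)\,
  l_{x,-b,c}(t)\,\Pi(\dd u)\,\dd x\,\dd t.
\end{align*}
Since this holds for all nonnegative measurable $G$, it identifies the joint law as claimed in \eqref{e:joint-exit-up}, with the constraint $u>c-x$ (equivalently $X_{T_c}=x+u>c$) built in, and $x>-b$ coming from the support of $l_{x,-b,c}$. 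Note $c-x>0$ automatically since $x<c$ on $(-b,c)$.

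The main obstacle is the justification of the initial reduction to a sum over jump times and the measurability/predictability bookkeeping needed to invoke the compensation formula cleanly: one must argue that on $\{T_c<\tau_{-b}\}$ the process is strictly inside $(-b,c)$ on $[0,T_c)$ (so the event can be written with a predictable indicator evaluated at $t-$), and that a single jump is responsible for the crossing with $X_{T_c-}$ finite in $(-b,c)$ a.s. These follow from general facts about spectrally positive \levy processes — downward movement is continuous, $0$ is regular for $(-\infty,0)$ so $\tau_{-b}=\inf\{s:X_s<-b\}$, and $X$ does not creep upward so $X_{T_c-}<c<X_{T_c}$ a.s.\ — all of which are available (cf.\ \cite{bertoin:96:cup, doney:07:sg-b, simon:11:sto} and the discussion preceding \eqref{e:ss-scale}); but stating them precisely enough to make the point-process argument rigorous is where the care is needed. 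Everything after the application of the compensation formula is a routine Fubini computation. I would remark that the argument uses nothing specific about the stable process beyond these path properties and the existence of a density $l_{x,-b,c}(t)$, which is why the lemma holds for a broad class of spectrally positive \levy processes.
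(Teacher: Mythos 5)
Your proposal is correct and follows essentially the same route as the paper: reduce the event $\{T_c<\tau_{-b}\}$ with the triple $(T_c, X_{T_c-}, \Delta_{T_c})$ to a sum over jump times using the path regularity of the spectrally positive process, then apply the compensation formula and Fubini with the density $l_{x,-b,c}$. In fact you spell out the compensation-formula computation that the paper (following Bertoin, p.~76) omits as standard.
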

\begin{proof}
  The proof follows the one on p.~76 of \cite{bertoin:96:cup}.  As
  already noted, $\pr\{X_{T_c}>c > X_{T_c-}>-b\}=1$.  By Rogozin's
  criterion (cf.\ \cite {doney:07:sg-b}, Theorem 5.17), under the law
  of $X$, 0 is regular for $(0,\infty)$ as well as for $(-\infty, 0)$,
  so $X_t<c$ for all $t<T_c$ and $X_t>-b$ for all $t<\tau_{-b}$.
  Therefore, almost surely, for any bounded function $f(t,x,u)\ge 0$,
  \begin{align*}
    &
    f(T_c,\, X_{T_c-}, \Delta_{T_c})\cf{T_c < \tau_{-b}}
    \\
    &=
    \sum_t f(t,\, X_{t-},\, \Delta_t)\cf{\Delta_t > c-X_{t-} > 0,\,
      X_{t-}>-b, \, -b<X_s<c\,\forall s\le t}.
  \end{align*}
  The sum on the \rhs is well-defined as it runs over the set of $t$'s
  where $X$ has a jump, which is countable.  The rest of the proof
  then applies the compensation formula to show that the expectation
  of the random sum is an integral of $f(t, x, u)$ with respect to the
  measure on the \rhs of \eqref{e:joint-exit-up}.  Since the argument
  has become standard, it is omitted for brevity.
\end{proof}

Since the \levy measure of $X$ is
\begin{align*}
  \Pi(\dd x) = \frac{\alpha(\alpha-1) \cf{x>0}}
  {\Gamma(2-\alpha)} \frac{\dd x}{x^{\alpha+1}}.
\end{align*}
by \eqref{e:no-exit} and Lemma \ref{l:first-exit}, 
\begin{align}\nonumber
  &
  \pr\{T_c<\tau_{-b},\, T_c\in\dd t,\, X_{T_c-}\in\dd x,\,
  \Delta_{T_c}\in \dd u\}
  \\\label{e:l-factor}
  &=
  \cf{c>x>-b} 
  l_{-x,b,c}(t)\,\dd t\,\dd x\,
  \frac{\cf{u>c-x}\alpha(\alpha-1)\dd
    u}{\Gamma(2-\alpha)u^{\alpha+1}}.
\end{align}
Now let $q=0$ in  \eqref{e:LT-l}.  Then
\begin{align*}
  \intzi l_{x,-b,c}(t)\,\dd t
  =
  \LT{l_{x,-b,c}(0)} 
  =
  \frac1{\Gamma(\alpha)}
  \Sbr{\frac{c^{\alpha-1} (b+x)^{\alpha-1}}{(b+c)^{\alpha-1}}
    - x^{\alpha-1}_+
  }.
\end{align*}
Then by \eqref{e:l-factor}, for $x\in (-b,c)$,
\begin{align*}
  \pr\{T_c<\tau_{-b},\, X_{T_c-}\in\dd x\}
  &=
  \dd x\intzi l_{x,-b,c}(t)\,\dd t \int^\infty_{c-x}
  \frac{\alpha(\alpha-1)}
  {\Gamma(2-\alpha)} \frac{\dd u}{u^{\alpha+1}}
  \\
  &=
  \Sbr{
    \frac{c^{\alpha-1} (b+x)^{\alpha-1}}{(b+c)^{\alpha-1}}
    - x^{\alpha-1}_+ 
  }
  \frac{(\alpha-1)}{\Gamma(\alpha)\Gamma(2-\alpha)}
  \frac{\dd x}{(c-x)^\alpha}.
\end{align*}
Since $\pr\{X_{T_c-}\in (-b,c)\}=1$ by Lemma \ref{l:first-exit}, then
the sub-\pdf formula in \eqref{e:undershoot} follows.  On the other
hand, given $x\in (-b,c)$, 
\begin{align*}
  \pr\{T_c\in\dd t,\, \Delta_{T_c}\in \dd u \gv T_c<\tau_{-b}, \,
  X_{T_c-}\in\dd x\}
  =
  C l_{-x,b,c}(t) \,\dd t \times \frac{\cf{u>c-x}\dd u}{u^{\alpha+1}},
\end{align*}
for some constant $C = C(x)$.  It follows that conditional on $T_c <
\tau_{-b}$ and $X_{T_c-} = x$, $T_c$ and $\Delta_{T_c}$ are
independent, with $\Delta_{T_c}$ following a Pareto distribution and
$T_c$ having a \pdf of the form $C' l_{-x,b,c}(t)$ with $C'$ another
constant, as claimed in Theorem \ref{t:exit-upper}.

\subsection{Contour integration}
The main step is to get the expression of $l_{x,-b,c}(t)$ for given
$x\in (-b,c)$.  As marked after Theorem \ref{t:exit-upper}, we need to
show that $l_{x,-b,c}(t)$ as a function of $x$ can be analytically
extended from $x\in (-b,0)$ to the entire $(-b,c)$.  To this end,
define
\begin{align} \label{e:no-overshoot}
  h_{x,c}(t)
  =
  \frac{\pr\{X_t\in \dd x,\, X_s < c\,\forall s\le t\}}{\dd x}.
\end{align}
Given $c>0$, $h_{x,c}(t)$ can be either regarded as a function of $t$
or a function of $x$.  It already plays a critical role in
\cite{chi:18:tr} in the derivation of the distribution of the triple
$(T_c, X_{T_c-}, X_{T_c})$, known as Gerber-Shiu distribution (\cite
{kyprianou:14:sh}, Chapter 10).

\begin{lemma} \label{l:lhk-renewal}
  Given $b>0$, $c>0$, and $x\in (-b,c)$, $l_{x,-b,c}(t) = h_{x,c}(t) -
  (k_{-b,c}*h_{b+x, b+c})(t)$.
\end{lemma}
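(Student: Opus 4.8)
The plan is to prove the identity by a strong Markov (first-passage) decomposition at the hitting time $\tau_{-b}$, carried out at the level of densities. Starting from the sub-probability $\pr\{X_t\in\dd x,\,X_s<c\ \forall s\le t\}$, whose density in $x$ is $h_{x,c}(t)$, I would split it according to whether $X$ reaches the level $-b$ before time $t$. Here I use the facts already invoked before Lemma~\ref{l:first-exit}: since $X$ has no negative jumps and $0$ is regular for $(-\infty,0)$, one has $\tau_{-b}=\inf\{s:X_s\le-b\}=\inf\{s:X_s=-b\}$ and $X_{\tau_{-b}}=-b$ on $\{\tau_{-b}<\infty\}$; and since $0$ is regular for $(0,\infty)$, $X_s<c$ for all $s<T_c$, a.s. In case (i), $\tau_{-b}>t$, the path stays strictly above $-b$ on $[0,t]$, so $\{X_s<c\ \forall s\le t\}\cap\{\tau_{-b}>t\}=\{X_s\in(-b,c)\ \forall s\le t\}$, whose contribution to the density at $x\in(-b,c)$ is, by definition~\eqref{e:no-exit}, exactly $l_{x,-b,c}(t)$.

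In case (ii), $\tau_{-b}\le t$, I would use
\begin{align*}
\cf{\tau_{-b}\le t}\,\cf{X_s<c\ \forall s\le t}
=\cf{\tau_{-b}\le t}\,\cf{\tau_{-b}<T_c}\,\cf{X_s<c\ \forall\,\tau_{-b}\le s\le t},
\end{align*}
which holds up to a null set because, on $\{\tau_{-b}<\infty\}$, the requirement $X_s<c$ for $s\le\tau_{-b}$ is equivalent to $\tau_{-b}<T_c$ (using that $X_s<c$ for all $s<T_c$), and $\{\tau_{-b}<T_c\}\in\mathcal F_{\tau_{-b}}$. Conditioning on $\{\tau_{-b}=r\}$, $r\in(0,t]$, and applying the strong Markov property together with spatial homogeneity and $X_{\tau_{-b}}=-b$, the surviving requirement — that $X$ stay below $c$ on $[\tau_{-b},t]$ and $X_t\in\dd x$ — becomes $h_{b+x,b+c}(t-r)\,\dd x$; concretely, if $\widetilde X$ is a fresh copy of $X$ issued from $0$, this probability equals $\pr\{\widetilde X_{t-r}\in\dd(b+x),\ \widetilde X_u<b+c\ \forall u\le t-r\}$. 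Since the law of $\tau_{-b}$ restricted to $\{\tau_{-b}<T_c\}$ has sub-density $k_{-b,c}$, integrating over $r\in(0,t)$ gives the contribution $(k_{-b,c}*h_{b+x,b+c})(t)\,\dd x$. Adding the two cases yields $h_{x,c}(t)=l_{x,-b,c}(t)+(k_{-b,c}*h_{b+x,b+c})(t)$, which rearranges to the claim; the passage to densities is legitimate because each $X_s$ has a \pdf $g_s$ (so $h_{x,c}(t)\le g_t(x)$) and $\tau_{-b}$ has a \pdf $f_{-b}$, so every sub-probability measure being disintegrated is absolutely continuous.

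The step I expect to be the main obstacle is making the case-(ii) decomposition fully rigorous: one must check that $\{\tau_{-b}<T_c\}$ is $\mathcal F_{\tau_{-b}}$-measurable and coincides, on $\{X_s<c\ \forall s\le t\}\cap\{\tau_{-b}\le t\}$, with the path staying below $c$ up to $\tau_{-b}$; and one must write ``$X_t\in\dd x$ and $X_s<c$ for $\tau_{-b}\le s\le t$'' as a jointly measurable functional of $\tau_{-b}$ and the shifted path $(X_{\tau_{-b}+u})_{u\ge0}$, so that the strong Markov property applies after conditioning on the value of $\tau_{-b}$, and then identify the resulting kernel with the translate $h_{b+x,b+c}$. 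The creeping property $X_{\tau_{-b}}=-b$ is precisely what makes this identification clean — there is no overshoot, hence no extra spatial integral, in contrast to the upward exit treated in Section~\ref{s:upper}. As a consistency check, the identity can also be verified by taking Laplace transforms in $t$, where it reduces, via \eqref{e:LT-k}, \eqref{e:LT-l} and the known transform of $h_{x,c}$ (cf.\ \cite{chi:18:tr}), to an algebraic relation among the scale functions $W\Sp q$.
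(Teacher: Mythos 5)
Your proof is correct and follows essentially the same route as the paper: decompose the event $\{X_s<c\ \forall s\le t\}$ at $\tau_{-b}$, use the continuity of downward passage and the strong Markov property, and identify the post-$\tau_{-b}$ contribution as $(k_{-b,c}*h_{b+x,b+c})(t)$. The only difference is presentational — the paper integrates against an arbitrary test function $f$ supported in $(-b,c)$ rather than arguing directly at the density level as you do.
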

\begin{proof}
  Let $f\ge 0$ be a function with support in $(-b,c)$.  Then for any
  $t>0$, 
  \begin{align*}
    \mean [f(X_t) \cf{X_s\in (-b,c)\forall s\le t}]
    =
    \int^c_{-b} f(x) l_{x,-b,c}(t)\,\dd x.
  \end{align*}
  On the other hand, the expectation can be decomposed as
  \begin{align*}
    \mean [f(X_t) \cf{X_s<c\forall s\le t}]
    -
    \mean [f(X_t) \cf{\tau_{-b}\le t,\, X_s<c\forall s\le t}].
  \end{align*}
  The first expectation in the display is equal to
  \begin{align*}
    \int^c_{-b} f(x) h_{x,c}(t)\,\dd x.
  \end{align*}
  By strong Markov property of $X$, the second expectation is equal to
  \begin{align*}
    &
    \int^c_{-b} \int^t_0 f(x) \pr\{X_t\in\dd x,\, \tau_{-b}\in\dd u,\, 
    X_s < c\forall s\le t\}
    \\
    &=
    \int^c_{-b} \int^t_0 f(x) \pr\{X_t\in\dd x+b,\, X_s<b+c\forall
    s\le t-u\} \pr\{\tau_{-b}\in\dd u,\, X_s < c\forall s\le u\}
    \\
    &=
    \int^c_{-b} f(x) \Sbr{
      \int^t_0 h_{x+b,b+c}(t-u) k_{-b,c}(u)\,\dd u
    }\,\dd x.
  \end{align*}
  Comparing the integrals and by $f$ being arbitrary, the claimed
  identity follows.
\end{proof}

\begin{lemma} \label{e:l-extend}
  Given $b>0$, $c>0$, and $t>0$, the mapping $x\to l_{x,-b,c}(t)$ has
  an analytic extension from $(-b,c)$ to $\{z-b: z\in\Omega\}
  \cap\{c-z: z\in\Omega\}$, where $\Omega = \{z\in\Coms: |\arg z| <
  \kappa^{-1}\pi/2\}$ with $1/\kappa = 1 - \ralph$.
\end{lemma}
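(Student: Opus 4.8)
The plan is to start from the renewal identity of Lemma~\ref{l:lhk-renewal}, which for $x\in(-b,c)$ reads
\begin{align} \label{e:plan-renewal}
  l_{x,-b,c}(t) = h_{x,c}(t) - \int_0^t k_{-b,c}(u)\,h_{b+x,b+c}(t-u)\,\dd u,
\end{align}
and to show that the right-hand side, viewed as a function of $x$, extends analytically to $\Omega_1\cap\Omega_2$, where $\Omega_1=\Cbr{z-b:\ z\in\Omega}$ is the open sector with vertex $-b$ opening toward $+\infty$ and $\Omega_2=\Cbr{c-z:\ z\in\Omega}$ is the open sector with vertex $c$ opening toward $-\infty$. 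Since $\Omega$ has half-angle $\kappa^{-1}\pi/2=(1-\ralph)\pi/2<\pi/4$, the set $\Omega_1\cap\Omega_2$ is a bounded region containing $(-b,c)$, and it is exactly the set in the statement; so \eqref{e:plan-renewal} will provide the desired extension.

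The first ingredient I would record is the analyticity of $h$ in its space variable: for every $c'>0$ and $v>0$ the map $w\mapsto h_{w,c'}(v)$ extends from $(-\infty,c')$ to an analytic function on $\Cbr{c'-z:\ z\in\Omega}$, with $h_{w,c'}(v)=O(v^{-\ralph})$ as $v\dto0$ locally uniformly for $w$ in compact subsectors of $\Omega$. This is available from \cite{chi:18:tr}; alternatively it follows from the strong Markov decomposition
\begin{align*}
  h_{w,c'}(v)=g_v(w)-\int_0^v\!\!\int_0^\infty g_{v-r}(w-c'-o)\,
  \pr\Cbr{T_{c'}\in\dd r,\ X_{T_{c'}}-c'\in\dd o},
\end{align*}
together with the following facts about the density $g_v$ of $X_v$, which I would read off from \eqref{e:X} (so that $\FT{g_v}(\theta)=e^{v(-\iunit\theta)^\alpha}$) by rotating the two half-lines of the Fourier-inversion contour into the sectors where $\Re\big((-\iunit\theta)^\alpha\big)<0$: $g_v$ extends to an entire function of its argument whose modulus, as the argument tends to $\infty$, is bounded (indeed decays at most polynomially) when the argument remains in the closed sector $\overline\Omega=\Cbr{|\arg z|\le\kappa^{-1}\pi/2}$, decays like $\exp(-c\,|\cdot|^\kappa)$ when it remains in $-\overline\Omega$ (the reflected sector around the negative reals), and grows like $\exp(c\,|\cdot|^\kappa)$ otherwise, while near $v=0$ one has $g_v(w)=v^{-\ralph}g_1\big(wv^{-\ralph}\big)$. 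Using these, the arc traced by $w-c'-o$ as $o$ runs over $(0,\infty)$ stays inside $-\overline\Omega$ precisely when $c'-w\in\overline\Omega$, which is what makes the double integral converge locally uniformly in $w$, and hence analytic, on $\Cbr{c'-z:\ z\in\Omega}$. Taking $c'=c$ shows the first term of \eqref{e:plan-renewal} is analytic in $x$ on $\Omega_2$; and for each fixed $u\in(0,t)$, $x\mapsto h_{b+x,b+c}(t-u)$ is analytic on $\Cbr{x:\ b+x\in\Cbr{(b+c)-z:\ z\in\Omega}}=\Omega_2$.

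It then remains to pass analyticity through the integral in \eqref{e:plan-renewal}. By Morera's theorem and Fubini, it suffices to check that $u\mapsto k_{-b,c}(u)\,h_{b+x,b+c}(t-u)$ is integrable on $(0,t)$ locally uniformly in $x$ on $\Omega_1\cap\Omega_2$. Near $u=0$ this is harmless, since $k_{-b,c}$ is continuous (Proposition~\ref{p:exit-lower}) and $h_{b+x,b+c}$ is there evaluated at a time bounded away from $0$; so the only delicate endpoint is $u\to t$, i.e.\ $v:=t-u\dto0$. There $h_{b+x,b+c}(v)=g_v(b+x)+O(v)$ for $x\in\Omega_2$, and $g_v(b+x)=v^{-\ralph}g_1\big((b+x)v^{-\ralph}\big)$ with $(b+x)v^{-\ralph}\toi$ in the direction $\arg(b+x)$. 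Hence $\int_{t-\rx}^t k_{-b,c}(u)\,\Abs{h_{b+x,b+c}(t-u)}\,\dd u$ is finite, locally uniformly in $x$, exactly when $\arg(b+x)$ stays in $\overline\Omega$ (so that $\Abs{g_v(b+x)}=O(v^{-\ralph})$, which is integrable near $0$ because $\ralph<1$); when $\arg(b+x)$ leaves $\overline\Omega$, $g_1$ grows super-exponentially and the integral diverges. This is precisely the requirement $b+x\in\Omega$, i.e.\ $x\in\Omega_1$. Combining with the previous paragraph, the right-hand side of \eqref{e:plan-renewal} is analytic in $x$ on $\Omega_2\cap(\Omega_1\cap\Omega_2)=\Omega_1\cap\Omega_2$, and on $(-b,c)$ it equals $l_{x,-b,c}(t)$; this is the claimed extension.

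The step I expect to be the main obstacle is the precise determination of the sector $\overline\Omega$: showing that its half-angle is exactly $\kappa^{-1}\pi/2$ — this is where the exponent $1/\kappa=1-\ralph$ enters, and it comes from intersecting the ranges of admissible rotations of the two half-lines of the Fourier-inversion contour for $g_v$ — and then upgrading the resulting pointwise asymptotics of $g_v$ (and hence of $h$) to bounds that are uniform for the argument in a compact subsector and for $v$ near $0$, so that the integrability at $u\to t$, and the passage to the limit defining the integral in \eqref{e:plan-renewal}, are genuinely locally uniform in $x$.
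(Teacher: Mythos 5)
Your proposal is correct and follows essentially the same route as the paper's proof: start from the identity of Lemma \ref{l:lhk-renewal}, take the space-analyticity of $h_{\cdot,c}$ on $\{c-z:z\in\Omega\}$ from \cite{chi:18:tr}, split $h_{b+x,b+c}(v)$ near $v=0$ into the free density $g_v(b+x)$ plus a harmless correction, use the locally uniform bound $|g_v(w)|\le C v^{-\ralph}$ on compact subsectors of $\Omega$ (which is exactly where the constraint $x\in\{z-b:z\in\Omega\}$ enters), and conclude by dominated convergence, Fubini and Morera. The only differences are minor implementation choices: the paper obtains the sector bound for $g_v$ from Zolotarev's integral representation rather than by rotating the Fourier-inversion contour, and controls the correction via $h_{x,a}=g_t(x)-\phi_t(x)$ with $\phi_t(x)=\int_0^t f_{x-a}(t-s)g_s(a)\,\dd s$ \cite{michna:15:ecp} together with a bound from the proof of Lemma 9 of \cite{chi:18:tr}, rather than your first-passage/overshoot decomposition.
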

\begin{proof}
  It is shown in \cite{chi:18:tr} that given $c>0$ and $t>0$, the
  mapping $x\to h_{x,c}(t)$ has an analytic extension from $(-\infty,
  c)$ to $\{c-z: z\in \Omega\}$.  Put $a = b+c$.  Then by Lemma
  \ref{l:lhk-renewal}, it suffices to show that $x\to (k_{-b,c}*h_{x,
    a})(t)$ has an analytic extension from $(0, a)$ to $\Omega\cap
  \{a-z: z\in\Omega\}$.

  For $x < a$, by p.~4/10 of \cite {michna:15:ecp},
  \begin{align*}
    h_{x,a}(t) = g_t(x) - \phi_t(x) > 0,
  \end{align*}
  with
  \begin{align*}
    \phi_t(x) = \int^t_0 f_{x-a}(t-s) g_s(a)\,\dd s>0.
  \end{align*}
  Then $(k_{-b,c}*h_{x,a})(t) = G(x) - \Phi(x)$, where
  \begin{gather*}
    G(x) = \int^t_0 k_{-b,c}(t-s) g_s(x)\,\dd s
    \intertext{and}
    \Phi(x) = \int^t_0 k_{-b,c}(t-s) \phi_s(x)\,\dd s.
  \end{gather*}
  We shall show that $G(x)$ has an analytic extension from $(0,
  \infty)$ to $\Omega$ and $\Phi(x)$ has an analytic extension from
  $(-\infty, a)$ to $\{a-z: z\in\Omega\}$.  This will finish the
  proof.

  First consider $\Phi(x)$.  In \cite{chi:18:tr}, the proof of its
  Lemma 9 establishes that $\phi_t$ has an analytic extension from
  $(-\infty, a)$ to $\{a-z: z\in\Omega\}$ such that, given $0<r_1
  < r_2 < \infty$ and $0< \beta_0 < \kappa^{-1}\pi/2$, for $z =
  re^{\iunit\beta}\in \Omega$ with $r\in [r_1, r_2]$ and $-\beta_0
  \le\beta = \arg z\le \beta_0$, letting $x_0 = r_1 d(\beta_0)>0$,
  where $d(\beta) = \cos(\kappa\beta)^{1/\kappa}$,
  \begin{align*}
    |\phi_t(a-z)|
    \le
    (r_2/x_0)^{\alpha/(\alpha-1)} g_t(a-x_0)<\infty.
  \end{align*}
  By scaling, $g_t(a-x_0) \le t^{-\ralph} \sup g_1$.  By \eqref
  {e:k-psi}, $\sup k_{-b,c}<\infty$.  Then by
  \begin{align*}
    \int^t_0 k_{-b,c}(t-s) |\phi_s(a-z)|\,\dd s
    \le
    \sup k_{-b,c} \int^t_0 (r_2/x_0)^{\alpha/(\alpha-1)}
    s^{-\ralph} \sup g_1\,\dd s,
  \end{align*}
  the \lhs is uniformly bounded for $z$ in any compact subset of
  $\Omega$.  By dominated convergence, $\Phi(a-z)$ is continuous in
  $\Omega$.  Then by Fubini's theorem followed by Morera's theorem
  (cf.\ \cite{rudin:87:mcgraw}, p.~208), $\Phi(a-z)$ is analytic in
  $\Omega$, as desired.

  Next consider $G(x)$.  It is known that $x\to g_t(x)$ has an analytic 
  extension from $\Reals$ to $\Coms$ (\cite{sato:99:cup}, p.~88).
  Then, as above, it suffices to show $\int^t_0 k_{-b,c}(t-s)|g_s(z)|
  \,\dd s$, or more simply, $\int^t_0 |g_s(z)|\,\dd s$  is uniformly
  bounded for $z$ in any compact subset of $\Omega$.  By \cite
  {zolotarev:66:stmsp}, for $x>0$,
  \begin{align*}
    g_t(x)
    =
    \frac{\alpha\pi}{\alpha-1} (x/t)^{\kappa/\alpha}
    \int^{\pi/2}_{\pi(\ralph-1/2)} a(\theta)
    \exp\{-x^\kappa t^{-\kappa/\alpha}
    a(\theta)\}\,\dd\theta,
  \end{align*}
  where $a(\theta)>0$ is a continuous function in the open interval
  of the integral.  Since $z\to z^\kappa$ maps $\Omega$ to $\{z:
  \Re(z)>0\}$, the integral on the \rhs can be analytically extended
  to $\Omega$, and hence the identity can be extended to all
  $z\in\Omega$.   If $z = re^{\iunit\beta}$ with $\beta = \arg z$, then
  by $|\beta|< \kappa^{-1}\pi / 2$, $\Re(z^\kappa) = [r
  d(\beta)]^\kappa>0$, where $d(\beta)$ is defined above.  Then
  \begin{align*}
    |g_t(z)|
    &\le
    \frac{\alpha\pi}{\alpha-1} (r/t)^{\kappa/\alpha}
    \int^{\pi/2}_{\pi(1/\alpha-1/2)} a(\theta)
    \exp\{-[r d(\beta)]^{\kappa} t^{-\kappa/\alpha}
    a(\theta)\}\,\dd\theta
    \\
    &=
    \frac{g_t(r d(\beta))}{d(\beta)^{\kappa/\alpha}}
    \le
    \frac{t^{-\ralph} \sup g_1}{d(\beta)^{\kappa/\alpha}},
  \end{align*}
  where the equality on the second line follows by comparing the
  integral with the previous display.  Since any compact subset of
  $\Omega$ is contained in a section $\{|\arg z|\le \beta_0\}$ for
  some $\beta_0<\kappa^{-1}\pi/2$ and since $d(\beta)>0$ is decreasing
  in $|\beta|$ in the section, it is then easy that $|g_t(z)| \le
  t^{-\ralph} \sup g_1/d(\beta_0)^{\kappa/\alpha}$, yielding the
  desired uniform boundedness. 
\end{proof}

We finally can prove the expression \eqref{e:residual2} for
$l_{x,-b,c}(t)$ and that it can analytically extended to
$\Coms\setminus (-\infty, b)$, which then finishes the proof of
Theorem \ref{t:exit-upper}.

\begin{proof}
  First, suppose $x\in (-b,0)$.  Then
  \begin{align*}
    \LT{l_{x,-b,c}}(q)
    =
    \frac{c^{\alpha-1} (b+x)^{\alpha-1}}{(b+c)^{\alpha-1}}
    \frac{
      E_{\alpha, \alpha}(c^\alpha q)
      E_{\alpha, \alpha}((b+x)^\alpha q)
    }{
      E_{\alpha, \alpha}((b+c)^\alpha q)
    }
  \end{align*}
  and \eqref{e:residual2} will follow once it is proved that $\FT 
  {l_{x,-b,c}}(\theta) = \LT {l_{x,-b,c}}(-\iunit\theta)$ is in
  $L^1(\dd\theta)$ and that
  \begin{align*}
    \nth{2\pi\iunit} \int^{\iunit\infty}_{-\iunit\infty}
    \frac{
      E_{\alpha, \alpha}(c^\alpha z)
      E_{\alpha, \alpha}((b+x)^\alpha z)
    }{
      E_{\alpha, \alpha}((b+c)^\alpha z)
    } e^{z t}\,\dd z
  \end{align*}
  is equal to the sum on the \rhs of \eqref{e:residual2}.  The
  argument is similar to that for Proposition \ref{p:exit-lower}.  Let
  \begin{align} \label{e:sv}
    s = c^\alpha/(b+c)^\alpha, \quad
    v = (b+x)^\alpha/(b+c)^\alpha.
  \end{align}
  Then by making change of variables $z' = (b+c)^\alpha z$ and $t' =
  t/(b+c)^\alpha$, and using the function $H_s(z)$ defined in
  \eqref{e:H_s(q)}, it boils down to showing that
  \begin{align} \label{e:l-ft}
    \intii |H_s(-\iunit\theta) E_{\alpha,\alpha}(-\iunit
    v\theta)|\,\dd\theta <\infty
  \end{align}
  and for any $t>0$,
  \begin{align} \label{e:l-contour}
    \int_{C_{R_n}} 
    H_s(z) E_{\alpha,\alpha}(v z) e^{t z}\,\dd z\to0, \quad n\toi,
  \end{align}
  where the contour $C_R$ and the numbers $R_n$ are defined in the
  proof of Proposition \ref{p:exit-lower}.

  By \eqref{e:asym-infty2}, with $\lambda = \cos(\alpha^{-1}\pi/2)>0$, 
  \begin{align*}
    |H_s(-\iunit\theta) E_{\alpha,\alpha}(-\iunit
    v\theta)| \sim 
    \alpha^{-1} |s v\theta|^{\ralph-1} e^{\lambda
      (s^\ralph + v^\ralph-1) |\theta|^\ralph}, \quad |\theta|\toi.
  \end{align*}
  Then \eqref{e:l-contour} easily follows by noting
  \begin{align} \label{e:negative}
    s^\ralph + v^\ralph - 1 = x/(b+c)<0.
  \end{align}
  Next, as in the proof of Proposition \ref{p:exit-lower}, divide
  $C_R$ into $C_{R,1}$ and $C_{R,2}$.  As $R\toi$, uniformly for $z\in
  C_{R,1}$, $H_s(z) = O(1) \exp\{(s^\ralph-1) z^\ralph\}$ and
  $E_{\alpha,\alpha}(v z) = O(1) (v z)^{\ralph-1} \exp\{v^\ralph
  z^\ralph\}$; see the derivation of \eqref{e:contour1}.  From
  \eqref{e:negative} again, there is $\eta = \eta(x)>0$, such that
  $\sup_{z\in C_{R,1}} |H_s(z) E_{\alpha, \alpha}(v z)| = O(e^{-\eta
    R})$.  Meanwhile, $|e^{t z}|\le 1$ for $z\in C_{R,1}$ and
  Length$(C_{R,1}) = O(R^\alpha)$.  Then
  \begin{align*}
    \int_{C_{R,1}} H_s(z) E_{\alpha,\alpha}(v z) e^{t z}\,\dd z
    = O(R^\alpha e^{-\eta R}) \to0,
    \quad R\toi.
  \end{align*}
  On the other hand, according to derivation of \eqref{e:contour2},
  for some $m_0>0$,
  \begin{align*}
    \sup_{z\in C_{R_n,2}} |H_s(z) E_{\alpha,\alpha}(v z)|
    =
    O(R^{1+\alpha}_n e^{m_0 R_n})\cdot O(R^{1-\alpha}_n e^{m_0 R})
    = O(R^2_n e^{2 m_0 R_n}).
  \end{align*}
  Meanwhile, according to the argument leading to \eqref{e:contour3},
  for some $b_0>0$, $|e^{t z}| \le e^{-b_0 R^\alpha t}$ for $t\in
  C_{R,2}$.  Then by $\alpha>1$ and $t>0$, 
  \begin{align*}
    \int_{C_{R_n,2}} H_s(z) E_{\alpha,\alpha}(v z) e^{t z}\,\dd z
    =
    O(R^{2+\alpha}_n e^{2m_0 R_n - b_0 R^\alpha_n t}),
    \quad n\toi.
  \end{align*}
  The desired convergence in \eqref{e:l-contour} then follows and
  hence \eqref{e:residual2} is proved in the case $x\in (-b,0)$.
  
  It only remains to show that the \rhs of \eqref {e:residual2} as a
  function of $x$ has an analytic extension to $\Coms\setminus
  (-\infty, -b]$ for given $t>0$.  Once this is done, since by Lemma
  \ref{e:l-extend}, $x\to l_{x,-b,c}(t)$ has an analytic extension to
  a domain containing $(-b,c)$ and since it was just proved that the
  two functions are equal on $(-b,0)$, then they must be equal on
  $(-b,c)$ and $x\to l_{x,-b,c}(t)$ can actually be extended to
  $\Coms\setminus (-\infty, -b]$, finishing the proof.
  
  Thus, let
  \begin{align*}
    w_\root(x) =
    \res\Grp{
      \frac{
        E_{\alpha,\alpha}(c^\alpha z) E_{\alpha,\alpha}((b+x)^\alpha
        z)
      }{
        E_{\alpha,\alpha}((b+c)^\alpha z)
      } e^{t z}, \frac\root{(b+c)^\alpha}
    }.
  \end{align*}
  It is easy to see that $w_\root$ has an analytic extension from
  $(-b,c)$ to $\Coms \setminus (-\infty, -b]$.  All $\root\in\Scr
  Z_{\alpha, \alpha}$ with large enough modulus are simple roots of
  $E_{\alpha, \alpha}(z)$ and have $|\arg\root|$ arbitrarily close but
  strictly greater than $\alpha\pi/2$.  For each such $\root$ and each
  $z\in \Coms\setminus (-\infty,0]$,
  \begin{align*}
    w_\root(z-b) =
    \frac{E_{\alpha,\alpha}(s\root)e^{t'\root}}{(b+c)^\alpha
      E'_{\alpha,\alpha}(\root)} \times 
    E_{\alpha, \alpha}\Grp{\frac{z^\alpha \root}{(b+c)^\alpha}},
  \end{align*}
  where $s$ is defined as in \eqref{e:sv} and $t' = t/(b+c)^\alpha$.
  From the last part of the proof for Proposition \ref{p:exit-lower},
  there is $c' = c'(t')>0$, such that
  \begin{align*}
    \Abs{\frac{E_{\alpha,\alpha}(s\root) e^{t'\root}}
      {(b+c)^\alpha E'_{\alpha,\alpha}(\root)}      
    } = O(1) e^{-c'|\root|}.
  \end{align*}
  On the other hand, by \eqref{e:asym-infty}, there is a constant
  $C>0$ such that
  \begin{align*}
    \Abs{
      E_{\alpha,\alpha}\Grp{\frac{z^\alpha\root}{(b+c)^\alpha}}
    }
    \le E_{\alpha,\alpha}\Grp{
      \frac{(|z|\vee 1)^\alpha |\root|}{(b+c)^\alpha}
    }= O(1)
    \exp\Cbr{C(|z|\vee 1) |\root|^\ralph}.
  \end{align*}
  Combining the two bounds then yields a bound on $|w_\root(z-b)|$.   
  Following the last part of the proof of Proposition \ref
  {p:exit-lower}, $\sum_{\root\in \Scr Z_{\alpha, \alpha}}|w_\root(z -
  b)|$ converges uniformly for $z$ in any compact subset of $\Coms
  \setminus(-\infty, 0]$.  Then $z\mapsto\sum_{\root\in \Scr
    Z_{\alpha, \alpha}} w_\root(z-b)$ is continuous on $\Coms
  \setminus(-\infty, 0]$.  Then by Fubini's  theorem followed by
  Morera's theorem, the \rhs of \eqref{e:residual2} as a function of
  $x$ has an analytic extension to $\Coms\setminus(-\infty, -b]$. 
\end{proof}

\subsection{Asymptotics}
We consider the asymptotics of $l_{x,-b,c}(t)$ as $t\dto0$ or
$\toi$.  First, we have
\begin{prop} \label{p:asym-l-0}
  Given $b>0$, $c>0$, and $x\in (-b,c)$, as $t\dto0$, $l_{x,-b,c}(t)
  \sim g_t(x)$.
\end{prop}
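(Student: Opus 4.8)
The plan is to show $g_t(x) - l_{x,-b,c}(t) = o\Grp{g_t(x)}$ as $t\dto0$; since $g_t$ is everywhere positive this gives $l_{x,-b,c}(t)/g_t(x)\to1$. Combine the renewal identity of Lemma~\ref{l:lhk-renewal}, $l_{x,-b,c} = h_{x,c} - k_{-b,c}*h_{b+x,b+c}$, with the decomposition $h_{y,a}(t) = g_t(y) - \int_0^t f_{y-a}(t-u)g_u(a)\,\dd u$, valid for $y<a$ (\cite{michna:15:ecp}, already used in the proof of Lemma~\ref{e:l-extend}). For $x\in(-b,c)$ and $t>0$ this gives
\begin{equation*}
  g_t(x) - l_{x,-b,c}(t)
  =
  \int_0^t f_{x-c}(t-u)\, g_u(c)\,\dd u
  + \Grp{k_{-b,c}*h_{b+x,b+c}}(t)
  \ge 0,
\end{equation*}
so it suffices to prove that each of the two nonnegative terms on the right is $o\Grp{g_t(x)}$.

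Each term is dominated by a supremum of transition densities at a \emph{positive} point times a hitting/passage probability that is stretched-exponentially small. Since $x-c<0$, $\int_0^t f_{x-c}(t-u)\,\dd u = \pr\{\tau_{x-c}\le t\}\le1$, so the first term is at most $\Grp{\sup_{0<u\le t}g_u(c)}\pr\{\tau_{x-c}\le t\}$; since $h_{b+x,b+c}(u)\le g_u(b+x)$ and $k_{-b,c}(u)\le f_{-b}(u)$ for all $u$, and $b+x>0$, the second term is at most $\Grp{\sup_{0<u\le t}g_u(b+x)}\pr\{\tau_{-b}\le t\}$. For fixed $y>0$, the scaling $g_u(y)=u^{-\ralph}g_1(yu^{-\ralph})$ and the heavy right tail $g_1(z)=O(z^{-\alpha-1})$ as $z\toi$ (\cite{sato:99:cup}) give $\sup_{0<u\le t}g_u(y)=O(t)$ as $t\dto0$. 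On the other hand, by \cite{sato:99:cup}, Eq.~(14.35), and scaling (exactly as in the derivation of \eqref{e:asym-k-0}), for every $a>0$ one has $\ln f_{-a}(t)\sim -C a^{\alpha/(\alpha-1)}t^{-1/(\alpha-1)}$ as $t\dto0$, with $C>0$ depending only on $\alpha$; as $f_{-a}$ is unimodal and vanishes at $0$ it is increasing near $0$, so $\pr\{\tau_{-a}\le t\}\le t f_{-a}(t)$ for small $t$, whence $\limsup_{t\dto0} t^{1/(\alpha-1)}\ln\pr\{\tau_{-a}\le t\}\le -Ca^{\alpha/(\alpha-1)}$. Applying this with $a=b$ and $a=c-x$ controls the two passage probabilities.

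It remains to compare with $g_t(x)$. If $x\ge0$, then $\liminf_{t\dto0}g_t(x)/t>0$ ($g_t(x)\sim\mathrm{const}\cdot t$ for $x>0$, and $g_t(0)=t^{-\ralph}g_1(0)\toi$), so both terms, being $O(t)$ times a stretched-exponentially small factor, are $o\Grp{g_t(x)}$. If $x<0$, then $-x\in(0,b)$, and by \cite{sato:99:cup}, Eq.~(14.35), and scaling, $\ln g_t(x)\sim -C(-x)^{\alpha/(\alpha-1)}t^{-1/(\alpha-1)}$ with the \emph{same} constant $C$. Since $x>-b$ forces $b>-x$ and $c>0$ forces $c-x>-x$, both $b^{\alpha/(\alpha-1)}$ and $(c-x)^{\alpha/(\alpha-1)}$ strictly exceed $(-x)^{\alpha/(\alpha-1)}$, so $\pr\{\tau_{-b}\le t\}$ and $\pr\{\tau_{x-c}\le t\}$ decay strictly faster than $g_t(x)$, and the $O(t)$ prefactors do not affect this; hence both terms are $o\Grp{g_t(x)}$ and $l_{x,-b,c}(t)\sim g_t(x)$.

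The main obstacle is the coincidence of constants used in the case $x<0$: one needs the stretched-exponential rate of the downward hitting-time density $f_{-a}(t)$ as $t\dto0$ to match that of the transition density $g_t(-a)$, i.e.\ both equal to $-Ca^{\alpha/(\alpha-1)}t^{-1/(\alpha-1)}$ with one and the same $C$. This is classical for $\alpha$-stable laws: it follows from \cite{sato:99:cup}, Eq.~(14.35), once one observes that after scaling both asymptotics are governed by the same exponential constant, or, equivalently, from the classical identity expressing $f_{-a}$ through $g_\cdot(-a)$. Everything else — the unimodality and boundary behaviour of $g_u(y)$ and $f_{-a}$ near $0$, and the $O(t)$ bounds — is routine.
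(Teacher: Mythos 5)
Your proof is correct, and it reaches the same quantitative heart as the paper's argument: the deficit $g_t(x)-l_{x,-b,c}(t)$ is controlled by convolutions of a downward first-passage density over a distance strictly larger than $(-x)_+$ with the free density at a point a positive distance away, and such terms are negligible relative to $g_t(x)$. The route differs in two places. The paper does not invoke Lemma~\ref{l:lhk-renewal} or the identity from \cite{michna:15:ecp} here; it bounds the deficit by a union bound over $\{\tau_{-b}<t\}$ and $\{T_c<t\}$, replaces $T_c$ by $\tau_c$ and uses time reversal, so both error terms take the single form $j(t)=\int_0^t f_{-a}(t-s)g_s(x+a)\,\dd s$ with $a>(-x)_+$; your exact decomposition via $l_{x,-b,c}=h_{x,c}-k_{-b,c}*h_{b+x,b+c}$ and $h_{y,a}=g-\phi$ yields two terms of exactly the same shape, so the decompositions are interchangeable, yours being an identity rather than an inequality at the cost of citing two more results already in the paper. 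In the comparison step the paper bounds $j(t)=O(1)f_{-a}(t)$ using $\sup g_s=O(s^{-\ralph})$ and unimodality of $f_{-a}$, then compares with $g_t(x)\asymp t$, $t^{-\ralph}$, or $t f_x(t)/(-x)$ (Bertoin, Corollary VII.3) and uses $f_{-a}(t)=o(f_{-x}(t))$ for $a>|x|$; you instead retain $\pr\{\tau_{-a}\le t\}$, bound the $g$-factor by $O(t)$ via the heavy right tail, and compare stretched-exponential log-rates from Eq.~(14.35) of \cite{sato:99:cup}, with the matching of constants between $f_{-a}(t)$ and $g_t(-a)$ supplied by Kendall's identity, which is the same Corollary VII.3 the paper cites — so the ``main obstacle'' you flag is genuine but is resolved exactly as you indicate and is implicit in the paper's route. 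One simplification: your $O(t)$ bound on $\sup_{0<u\le t}g_u(y)$ for $y>0$ is not needed; the cruder $\sup g_u=O(u^{-\ralph})$ suffices, since any power of $t$ is absorbed by the strict gap between the stretched-exponential rates $b^{\alpha/(\alpha-1)}$, $(c-x)^{\alpha/(\alpha-1)}$ and $(-x)^{\alpha/(\alpha-1)}$.
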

\begin{proof}
  It is clear that $l_{x,-b,c}(t) < g_t(x)$.  On the other hand,
  \begin{align*}
    g_t(x) - l_{x,-b,c}(t) 
    &\le
    \frac{\pr\{X_t\in \dd x,\, \tau_{-b}<t\}}{\dd x}
    +
    \frac{\pr\{X_t\in \dd x,\, T_c<t\}}{\dd x}
    \\
    &=
    \frac{\pr\{X_t\in \dd x,\, \tau_{-b}<t\}}{\dd x}
    +
    \frac{\pr\{X_t\in \dd x,\, \tau_{c}<t\}}{\dd x}
    \\
    &=
    \frac{\pr\{X_t\in \dd x,\, \tau_{-b}<t\}}{\dd x}
    +
    \frac{\pr\{X_t\in \dd x,\, \tau_{-(c-x)}<t\}}{\dd x},
  \end{align*}
  where the third line follows by considering $\sup\{s<t: X_s=c\}$ as
  well as time reversal.  Note that both $b$ and $c-x$ are greater
  than $(-x)_+$.  Then it suffices to show that for any $x\in\Reals$
  and $b>(-x)_+$, $j(t):=\pr\{X_t\in\dd x,\, \tau_{-b}<t\}/\dd x =
  o(g_t(x))$ as  $t\to 0$.

  By strong Markov property and $\sup g_s = O(s^{-1/\alpha})$, 
  \begin{align*}
    j(t) = \int^t_0 f_{-b}(t-s) g_s(x+b)\,\dd s
    = O(1) t^{1-1/\alpha} \sup_{s\le t} f_{-b}(s).
  \end{align*}
  Since $f_{-b}$ is unimodal (\cite{sato:99:cup}, p.~416), then for
  small $t>0$, $j(t) = O(1) f_{-b}(t)$.  On the other hand,
  \begin{align*}
    g_t(x) \asymp
    \begin{cases}
      t & \text{if } x>0,\\
      t^{-1/\alpha} & \text{if } x=0,\\
      t f_x(t)/(-x) & \text{if } x<0
    \end{cases}
    \quad\text{as } t\dto 0,
  \end{align*}
  where the last line is by Corollary VII.3 in \cite{bertoin:96:cup}.
  It is then clear that if $x\ge0$, then $j(t) = o(g_t(x))$.  On
  the other hand, if $x<0$, since $b>|x|$, then $f_{-b}(t) =
  o(f_{-x}(t))$, again yielding $j(t) = o(g_t(x))$.  
\end{proof}

Finally, by the same argument for the tail of $k_{-b,c}(t)$,
as $t\toi$, $l_{x,-b,c}(t)$ decreases exponentially fast with
\begin{align*}
  \limsup_{t\toi}
  \frac{\ln l_{x,-b,c}(t)}{t} = -\frac{\varrho}{(b+c)^\alpha},
\end{align*}
where $-\varrho < 0$ is the largest real root of $E_{\alpha,\alpha}$
and $n$ its multiplicity.  Again, the exact asymptotic of
$l_{x,-b,c}(t)$ depends on more detail of the roots along the line
$\Re(z) = -\varrho$.
%\bibliography{Levy,Books}

%LocalWords: positivity integrable Fubini's Morera's Rogozin's asym
%LocalWords: infty
\end{document}